\newcommand{\cc}{{\ensuremath{\mathbb C}}}
\newcommand{\nn}{{\ensuremath{\mathbb N}}}
\newcommand{\rr}{{\ensuremath{\mathbb R}}}
\newcommand{\zz}{{\ensuremath{\mathbb Z}}}
\newcommand{\qq}{{\ensuremath{\mathbb Q}}}
\DeclareMathOperator{\RE}{Re}
\theoremstyle{plain}
\newtheorem{theorem}{Theorem}
\newtheorem{corollary}[theorem]{Corollary}
\newtheorem{lemma}[theorem]{Lemma}
\newtheorem{definition}[theorem]{Definition}
\newtheorem{proposition}[theorem]{Proposition}
\newtheorem{remark}[theorem]{Remark}
\newtheorem{example}[theorem]{Example}
\theoremstyle{nonumberplain}
\newtheorem{proof}{Proof.\quad}
\begin{document}
 \title{When is an origami set a ring?}
 \author{Florian Möller\thanks{fmoeller@mathematik.uni-wuerzburg.de}}
 \date{\today}
 \maketitle
 
\begin{abstract}
 \noindent Starting with a flat sheet of paper, points can be constructed as the intersection of two folds. The set of constructible points clearly depends on which folds are admissible. In this paper, we study the situation where a fold is admissible if its slope is admissible and it contains an already constructed or a generator point. We give an explicit characterization of the set of constructible points. We also state several criteria for this set to be a ring. This answers questions originally raised by Erik Demaine and discussed by Butler~et~al.~\cite{but13} and Buhler~et~al.~\cite{buh12}.
 
 \minisec{Acknowledgement}
 
 We are grateful to Dmitri Nedrenco for pointing out this problem to us.
\end{abstract}

\section{Introduction}\label{sec:intro}

Identifying a flat sheet of paper with the Euclidean plane and, subsequently, the Euclidean plane with the field~$\cc$ of complex numbers it is possible to describe geometric constructions with algebraic means. This method of algebraization of geometric problems has proved very successful: For instance, in the language of field theory it is easy to precisely describe the points constructible by compass and straightedge. This readily shows that some of the classical compass-and-straightedge construction problems are unsolvable or that the set of compass-and-straightedge constructible points is a subfield of~$\cc$.

In this paper we apply the idea of algebraization to a type of construction motivated by origami related questions. In~\cite{but13} Butler et al.\@ ask which points in the plane can be constructed using origami techniques when there is the following limitation on the folds: Starting from the generator points $0,1\in\cc$ only folds through already existing points with prescribed slopes are allowed. We call the set of points obtained in this way an \emph{origami set}.

\paragraph{Origami sets and origami rings}

\noindent We use the following mathematical concepts to model the folding process:

A fold is a straight line, its slope the angle enclosed with the real axis. Obviously, it suffices to only consider angles~$\alpha$ with $0\leq\alpha<\pi$.

Fix a subset~$U\subseteq[0,\pi[$. We interpret $U$ as the set of prescribed slopes of lines. Throughout this paper we assume that $0\in U$ and that $U$ contains at least three elements.

The set of generator points is given by $M_0:=\{0,1\}\subseteq\cc$. We define sets $M_k$ recursively: If~$M_{k-1}$ is already known for some $k\in\nn$, then~$M_k$ denotes the set of all intersection points of lines through elements of $M_{k-1}$ with prescribed slopes, i.\,e.
\[M_k:=\bigcup_{\substack{z,z'\in M_{k-1}\\\alpha,\alpha'\in U\text{ with } \alpha\ne\alpha'}} \bigl(z+\rr\cdot\exp(i\alpha)\bigr)\cap\bigl(z'+\rr\cdot\exp(i\alpha')\bigr).\]
Choosing $z=z'$ in the above equation yields $M_{k-1}\subseteq M_k$. The union
\[M(U):=\bigcup_{k=0}^\infty M_k\]
is called the \emph{origami set} with respect to the slopes given by~$U$. An \emph{origami ring} is an origami set that also is a subring of~$\cc$. The intersection $M_{\rr}(U):=M(U)\cap \rr$ is called the \emph{real part} of~$M(U)$.

\paragraph{Addressing complex numbers}

We present a way of addressing complex numbers that is particularly well suited to describe origami sets.

\noindent\begin{minipage}{.6\textwidth}
Let $\alpha\in\mathopen]0,\pi[$ denote an angle. Since $\alpha$ is non-zero, there is a uniquely defined intersection point of the line $z+\rr\cdot\exp(i\alpha)$ through $z$ with slope~$\alpha$ and the real axis. We denote this point with $\alpha(z)$ and call it the \emph{$\alpha$-projection of $z$}.

For instance, the real part of a complex number $z\in\cc$ is just its $\frac\pi2$-projection.
\end{minipage}
\begin{minipage}[c]{.4\textwidth}
\begin{center}
 \begin{tikzpicture}[>=latex]
    \draw[->] (-.3,0)--(3,0) node[font=\small,right] {\strut$\rr$};
    \draw[->] (0,-.5)--(0,2.7) node[font=\small,right] {\strut$\rr\cdot i$};
    \fill (2,2) circle[radius=1.5pt] node[font=\small,right] {\strut$z$} coordinate (z);
    \draw[dashed] (1,0)--(2.2,2.4);
    \draw (1,3pt)--(1,-3pt) node[font=\small,below] {\strut$\alpha(z)$};
    \coordinate (base) at (1,0);
    \coordinate (x) at (2,0);
    \pic[draw,->,"$\alpha$", angle eccentricity=1.5] {angle = x--base--z};
 \end{tikzpicture}\medskip
\end{center}
\end{minipage}

\noindent It is easily seen that the $\alpha$-projection is a projection in the linear algebraic sense:

\begin{lemma}\label{lem:angleproj}
 Let $\alpha\in\mathopen]0,\pi[$ be an angle. Define the map
  \[\alpha:\;\cc\to\rr,\qquad z\mapsto\alpha(z).\]
 Then the following statements hold:
 \begin{enumerate}
  \item $\alpha$ is $\rr$-linear: The equations  $\alpha(w+z)=\alpha(w)+\alpha(z)$ and $\alpha(\lambda\cdot w)=\lambda\cdot\alpha(w)$ hold for all $w,z\in\cc$ and $\lambda\in\rr$.
  \item $\alpha$ is idempotent: The identity $\alpha\bigl(\alpha(z)\bigr)=\alpha(z)$ holds for all $z\in\cc$.
  \item The restriction $\alpha|_\rr$ is the identity map. The equality $\alpha(x)=x$ holds if and only $x\in\rr$.
 \end{enumerate}
\end{lemma}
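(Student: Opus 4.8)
The plan is to reduce all three statements to a single explicit closed form for the $\alpha$-projection. Writing $z=\RE(z)+i\operatorname{Im}(z)$, I would parametrise the fold through $z$ with slope $\alpha$ as $z+t\exp(i\alpha)$ for $t\in\rr$ and determine the unique parameter $t_0$ at which this point lands on the real axis. Since $\alpha\in\mathopen]0,\pi[$ we have $\sin\alpha\neq0$, so the imaginary part $\operatorname{Im}(z)+t\sin\alpha$ vanishes exactly for $t_0=-\operatorname{Im}(z)/\sin\alpha$; this single observation simultaneously confirms that $\alpha(z)$ is well defined (both existence and uniqueness of the intersection). Substituting $t_0$ into the real part yields the closed form
\[ \alpha(z)=\RE(z)-\cot(\alpha)\,\operatorname{Im}(z). \]

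With this identity in hand the three assertions become immediate. For~(1), both $\RE$ and $\operatorname{Im}$ are $\rr$-linear maps $\cc\to\rr$, hence so is their linear combination $\alpha=\RE-\cot(\alpha)\operatorname{Im}$; this delivers additivity and $\rr$-homogeneity at once. For~(3), if $x\in\rr$ then $\operatorname{Im}(x)=0$ and the formula collapses to $\alpha(x)=\RE(x)=x$, so $\alpha|_\rr$ is the identity. Conversely, the right-hand side of the formula is a real number for every $z\in\cc$, so $\alpha(z)$ is always real; hence $\alpha(z)=z$ forces $z\in\rr$, and together with the previous sentence this establishes the stated equivalence.

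Finally, (2) follows formally from (3): for arbitrary $z$ the value $\alpha(z)$ is real, and $\alpha$ acts as the identity on $\rr$, so $\alpha\bigl(\alpha(z)\bigr)=\alpha(z)$. The main obstacle, such as it is, lies not in any later step but in the derivation of the formula itself: one must explicitly use $\sin\alpha\neq0$ to guarantee that the fold is transverse to the real axis, which is precisely what makes $\alpha(z)$ well defined and, via the reality of the output, underlies idempotency. Beyond securing this non-degeneracy, the remainder is purely linear algebra over $\rr$.
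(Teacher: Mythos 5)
Your proof is correct. The paper itself gives no argument for this lemma---it is dismissed with ``It is easily seen that the $\alpha$-projection is a projection in the linear algebraic sense''---so there is nothing to diverge from; your derivation of the closed form $\alpha(z)=\RE(z)-\cot(\alpha)\operatorname{Im}(z)$ (valid since $\sin\alpha\neq0$ on $\mathopen]0,\pi[$, which also settles well-definedness) is exactly the routine computation the authors are implicitly appealing to, and your reduction of (1), (2), (3) to that formula---including the neat observation that idempotency is a formal consequence of (3) because the image of $\alpha$ lies in $\rr$---is complete and sound.
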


\noindent A complex number is uniquely given by two projections:

\begin{definition}
 Let $\alpha,\beta\in\mathopen]0,\pi[$ be two different angles. Then the map
 \[\cc\to\rr^2,\qquad z\mapsto \bigl(\alpha(z),\beta(z)\bigr)\]
 is a bijection. We call the pair~$\bigl(\alpha(z),\beta(z)\bigr)\in\rr^2$ the \emph{${(\alpha,\beta)}$-coordinates} of $z$.
 
 Vice versa, given two real numbers $r,s\in\rr$ we denote with $\llbracket r,s\rrbracket_{\alpha,\beta}$ the unique complex number $z\in\cc$ fulfilling $\alpha(z)=r$ and $\beta(z)=s$. Hence, $\llbracket r,s\rrbracket_{\alpha,\beta}$ is exactly the complex number with $(\alpha,\beta)$-coordinates $(r,s)$ and fulfills the equation
 \[\bigl\{\llbracket r,s\rrbracket_{\alpha,\beta}\bigr\}=\bigl(r+\rr\cdot\exp(i\alpha)\bigr)\cap\bigl(s+\rr\cdot\exp(i\beta)\bigr).\]
 
  \begin{center}
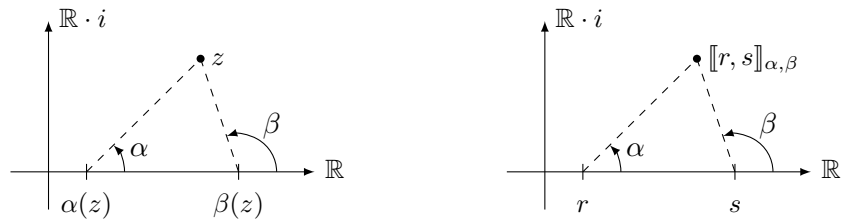

  \begin{tikzpicture}[>=latex]
    \draw[->] (-.5,0)--(3.5,0) node[font=\small,right] {\strut$\rr$};
    \draw[->] (0,-.5)--(0,2) node[font=\small,right] {\strut$\rr\cdot i$};
    \fill (2,1.5) circle[radius=1.5pt] node[font=\small,right] {\strut$z$} coordinate (z);
    \draw[dashed] (.5,0) coordinate (phi)--(z);
    \draw[dashed] (2.5,0) coordinate (psi)--(z);
    \draw (.5,3pt)--(.5,-3pt) node[font=\small,below] {\strut$\alpha(z)$};
    \draw (2.5,3pt)--(2.5,-3pt) node[font=\small,below] {\strut$\beta(z)$};
     \coordinate (x) at (5,0);
     \pic[draw,->,"$\alpha$", angle eccentricity=1.5] {angle = x--phi--z};
     \pic[draw,->,"$\beta$", angle eccentricity=1.5] {angle = x--psi--z};
 \end{tikzpicture}\hspace{2cm}\begin{tikzpicture}[>=latex]
    \draw[->] (-.5,0)--(3.5,0) node[font=\small,right] {\strut$\rr$};
    \draw[->] (0,-.5)--(0,2) node[font=\small,right] {\strut$\rr\cdot i$};
    \fill (2,1.5) circle[radius=1.5pt] node[font=\small,right] {\strut$\llbracket r,s\rrbracket_{\alpha,\beta}$} coordinate (z);
    \draw[dashed] (.5,0) coordinate (phi)--(z);
    \draw[dashed] (2.5,0) coordinate (psi)--(z);
    \draw (.5,3pt)--(.5,-3pt) node[font=\small,below] {\strut$r$};
    \draw (2.5,3pt)--(2.5,-3pt) node[font=\small,below] {\strut$s$};
     \coordinate (x) at (5,0);
     \pic[draw,->,"$\alpha$", angle eccentricity=1.5] {angle = x--phi--z};
     \pic[draw,->,"$\beta$", angle eccentricity=1.5] {angle = x--psi--z};
 \end{tikzpicture}
 \captionof{figure}{The connection between $(\alpha,\beta)$-coordinates and complex numbers.}
 \end{center}
\end{definition}

\noindent The next result follows directly from this definition and Lemma~\ref{lem:angleproj}.

\begin{lemma}\label{lem:anglecoord}
 Let $\alpha,\beta\in\mathopen]0,\pi[$ be two different angles. Then the following statements hold, where we write $\llbracket\cdot,\cdot\rrbracket$ instead of $\llbracket\cdot,\cdot\rrbracket_{\alpha,\beta}$ for the sake of readability:
 \begin{enumerate}
 \item For all $r,s\in\rr$ the equations $\alpha\bigl(\llbracket r,s\rrbracket\bigr)=r$ and $\beta\bigl(\llbracket r,s\rrbracket\bigr)=s$ hold. Conversely, for all $z\in\cc$ one has $z=\llbracket\alpha(z),\beta(z)\rrbracket$.
 \item Let $r,s$ be real numbers. Then, $\llbracket r,s\rrbracket$ is a real number if and only if $r=s$ holds. In this case one has $r=\llbracket r,r\rrbracket$.
  \item As a consequence of~(a) it follows that the map $\llbracket\cdot,\cdot\rrbracket:\;\rr^2\to\cc$ is $\rr$-linear: The equations 
  \[\llbracket r,s\rrbracket+\llbracket r',s'\rrbracket=\llbracket r+r',s+s'\rrbracket\quad\text{and}\quad \llbracket \lambda r,\lambda s\rrbracket=\lambda\,\llbracket r,s\rrbracket\]
  hold for all real numbers $r,s',r',s',\lambda$. Together with~(b) this implies $1=\llbracket1,0\rrbracket+\llbracket0,1\rrbracket$.
 \end{enumerate}
\end{lemma}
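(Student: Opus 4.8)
The plan is to treat all three parts as consequences of a single guiding principle: by the Definition, the assignment $z\mapsto\bigl(\alpha(z),\beta(z)\bigr)$ is a bijection with inverse $\llbracket\cdot,\cdot\rrbracket$, so two complex numbers coincide as soon as their $\alpha$- and $\beta$-projections agree. Every identity below will be verified by computing these two projections and invoking this uniqueness. Part~(a) then requires almost nothing: the equations $\alpha\bigl(\llbracket r,s\rrbracket\bigr)=r$ and $\beta\bigl(\llbracket r,s\rrbracket\bigr)=s$ are exactly the defining property of $\llbracket r,s\rrbracket$ as the number with $(\alpha,\beta)$-coordinates $(r,s)$, while the converse $z=\llbracket\alpha(z),\beta(z)\rrbracket$ merely records that $\llbracket\cdot,\cdot\rrbracket$ inverts the coordinate map. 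I would dispatch this in one or two sentences.

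For part~(b) I would argue both implications. If $\llbracket r,s\rrbracket$ happens to be a real number $x$, then part~(a) gives $r=\alpha(x)$ and $s=\beta(x)$, and Lemma~\ref{lem:angleproj}(c) forces $\alpha(x)=x=\beta(x)$, whence $r=s$. Conversely, if $r=s$, the same part of Lemma~\ref{lem:angleproj} shows that the real number $r$, regarded as an element of $\cc$, satisfies $\alpha(r)=r$ and $\beta(r)=r=s$; by the uniqueness from part~(a) this means $\llbracket r,s\rrbracket=r$, which is in particular real.

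For part~(c) I would establish additivity and homogeneity by the same projection technique. Applying the $\rr$-linear map $\alpha$ (Lemma~\ref{lem:angleproj}(a)) to $\llbracket r,s\rrbracket+\llbracket r',s'\rrbracket$ and using part~(a) yields $r+r'$, and likewise $\beta$ yields $s+s'$; hence both sides of the additivity identity carry the same $(\alpha,\beta)$-coordinates and must be equal. The homogeneity identity $\llbracket\lambda r,\lambda s\rrbracket=\lambda\,\llbracket r,s\rrbracket$ follows identically from the linearity of $\alpha$ and $\beta$. Finally, combining additivity with part~(b) gives $\llbracket1,0\rrbracket+\llbracket0,1\rrbracket=\llbracket1,1\rrbracket=1$.

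The computations are entirely routine and there is no genuine obstacle. The only point demanding a little care is logical rather than technical: one must consistently reduce an equality of complex numbers to the equality of their two projections, and cite the correct part of Lemma~\ref{lem:angleproj} at each step — idempotency and the behaviour on $\rr$ for part~(b), $\rr$-linearity for part~(c).
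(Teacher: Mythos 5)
Your proposal is correct and matches the paper's intent exactly: the paper offers no written proof, stating only that the lemma ``follows directly from this definition and Lemma~\ref{lem:angleproj}'', and your argument --- reducing every identity to equality of $\alpha$- and $\beta$-projections via the bijectivity of the coordinate map, citing Lemma~\ref{lem:angleproj}(c) for part~(b) and $\rr$-linearity for part~(c) --- is precisely that verification spelled out.
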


\paragraph{Notation} 

Throughout this paper we employ the following notation:

$U\subseteq [0,\pi[$ denotes the set of prescribed slopes. We always assume $0\in U$ and $|U|\geq3$. We write $M$ for the origami set $M(U)$ and $M_\rr$ for its real part. The symbols $\alpha$ and $\beta$ denote angles of~$U\smallsetminus\{0\}$ with $\alpha\ne\beta$. We write $\llbracket\cdot,\cdot\rrbracket$ instead of $\llbracket\cdot,\cdot\rrbracket_{\alpha,\beta}$.

\section{The structure of origami sets}

The aim of this section is twofold:

First, we want to give a set theoretic description of origami rings. This is done in Theorem~\ref{thm:mexplicit} which states that every origami ring is the $M_\rr$-span of~$1$ and $\llbracket1,0\rrbracket$. So the structure of~$M$ is pretty easy — provided that the real part $M_\rr$ of $M$ is known. We deal with this restriction in Theorem~\ref{thm:mrexplicit} where we give an explicit description of $M_\rr$ in terms of the elements of~$U$. A surprising consequence of this theorem is that $M_\rr$ is always a subring of~$\rr$.

Second, we discuss the algebraic structure of~$M$. It is well known that $M$ is an additive group. We prove this in Theorem~\ref{thm:mexplicit}. In Theorem~\ref{thm:ring} we give several criteria for $M$ to be an origami ring.

\subsection*{Reduction to $M_\rr$}

Let $z$  be an element of~$M$. The projections $\alpha(z)$ and $\beta(z)$ are elements of $M_\rr$ since they are intersections of the admissible lines. Conversely, Lemma~\ref{lem:anglecoord} shows that the equation $\alpha(z)=\beta(z)=z$ holds for all $z\in M_\rr$. This gives the equality
\begin{align}\label{eq:mr}
M_\rr=\{\alpha(z),\beta(z) : z\in M\}.
\end{align}
On the other hand, if $r,s$ are elements of $M_\rr$, then $\llbracket r,s\rrbracket$ is an element of $M$. Conversely, by~\eqref{eq:mr}, for any $z\in M$ the $\alpha$- and $\beta$-projections of $z$ are elements of $M_\rr$. Thus,
\begin{align}\label{eq:m}
M=\bigl\{\llbracket r,s\rrbracket : r,s\in M_\rr\bigr\}.
\end{align}
Equation~\eqref{eq:m} shows that~$M$ can be entirely reconstructed out of $M_\rr$. So, no information is lost when transitioning from $M$ to $M_\rr$.\medskip

\noindent It is well known that~$M$ is an additive subgroup of~$\cc$, cf.~\cite[Thm.~3.1]{buh12}. The following preparatory lemma states this result for the real part of $M$:

\begin{lemma}\label{lem:mrgroup}
$M_\rr$ is an additive subgroup of~$\rr$. Since $0,1\in M_\rr$, it follows that $\zz\subseteq M_\rr$.
\end{lemma}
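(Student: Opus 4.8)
The plan is to deduce the subgroup property of $M_\rr$ directly from the fact, recalled just before the statement, that $M$ is an additive subgroup of $\cc$ (cf.\ \cite[Thm.~3.1]{buh12}). Since $\rr$ is itself an additive subgroup of $(\cc,+)$, and the intersection of two subgroups is again a subgroup, the set $M_\rr = M\cap\rr$ is an additive subgroup of $\cc$. Being contained in $\rr$, it is an additive subgroup of $\rr$. This settles the first assertion with essentially no computation: the entire substance is carried by the cited result on $M$.

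For the second assertion I would first verify that $0,1\in M_\rr$. Both points lie in $M_0\subseteq M$ by definition of the origami set, and both are real, so both lie in $M\cap\rr = M_\rr$. Now $M_\rr$ is an additive subgroup of $(\rr,+)$ containing $1$, so closure under addition yields $n\cdot 1 = 1+\dots+1\in M_\rr$ for every positive integer $n$, closure under negation yields $-n\in M_\rr$, and $0\in M_\rr$ is the neutral element. Hence $\zz\subseteq M_\rr$, as claimed.

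I do not expect a genuine obstacle here, precisely because the hard part—closure of $M$ under addition—is imported as a known result rather than reproved. If one instead wanted a self-contained argument, the real difficulty would be to exhibit, for given $r,s\in M_\rr$, an explicit sequence of admissible folds producing $r+s$ (and likewise $-r$) on the real axis; this is exactly the geometric content behind \cite[Thm.~3.1]{buh12} and would require the parallel-line constructions that underlie the addressing $\llbracket\cdot,\cdot\rrbracket$. Routing everything through the intersection $M\cap\rr$ sidesteps this and keeps the lemma in its intended role as a short preparatory bookkeeping step.
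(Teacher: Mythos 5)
Your proof is correct, but it takes a genuinely different route from the paper's. You derive the lemma top-down: you import the group structure of $M$ from \cite[Thm.~3.1]{buh12} and intersect the subgroups $M$ and $\rr$, which is logically sound (their Theorem~3.1 needs only $|U|\geq 3$, not the group hypothesis on $U$, so it does cover this setting), and your treatment of $\zz\subseteq M_\rr$ matches the intended routine argument. The paper instead proves the lemma bottom-up and self-containedly via the subgroup test: for $r,s\in M_\rr$ it forms $z:=\llbracket r,s\rrbracket\in M$, intersects the horizontal fold $z+\rr\cdot\exp(i0)$ — this is where the standing hypothesis $0\in U$ enters — with the fold $0+\rr\cdot\exp(i\alpha)$ to obtain a point $z'\in M$ with $(\alpha,\beta)$-coordinates $(0,x)$, and reads off $s-r=x\in M_\rr$ from a pair of congruent triangles (and $r-s$ symmetrically, using slope $\beta$ through $0$). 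The difference matters architecturally rather than mathematically: Lemma~\ref{lem:mrgroup} feeds into Theorem~\ref{thm:mexplicit}, which the paper explicitly advertises as its own proof that $M$ is an additive subgroup of $\cc$ — precisely the fact you import. If your argument were spliced in, that downstream derivation would become circular as an internal proof, with the group property of $M$ resting entirely on the external citation; harmless in substance, but it defeats the paper's self-containedness. Your route buys brevity; the paper's route buys independence from \cite{buh12} and supplies exactly the explicit fold construction you correctly flagged as the ``real difficulty'' — the congruent-triangle argument is that construction, and it turns out to be short.
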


\begin{proof}
 We employ the subgroup test: Let $r,s$ be elements of the non-empty set $M_\rr$ with $s\geq r$. Define $z:=\llbracket r,s\rrbracket$. By definition of the origami set~$M$, the intersection point~$z'$ of the lines $z+\rr\cdot\exp(i0)$ and $0+\rr\cdot\exp(i\alpha)$ is an element of $M$. Its $(\alpha,\beta)$-coordinates are $(0,x)$ where~$x$ denotes an appropriate element of $M_\rr$.
 
 The triangle with vertices $r$, $z$, $s$ is congruent to the triangle with vertices $0$, $z'$, $x$. So, the corresponding sides of both triangles have the same length, giving $s-r=x-0=x\in M_\rr$.
 
 \begin{center}
 
 \begin{tikzpicture}[>=latex]
   \draw[->] (0,0)--(4.5,0) node[font=\small,right] {\strut$\rr$};
   \draw[->] (0,0)--(0,2) node[font=\small,right] {\strut$\rr\cdot i$};
   \draw (2.5,3pt)--(2.5,-3pt) node[font=\small,below] {\strut$r$};
   \draw (3.5,3pt)--(3.5,-3pt) node[font=\small,below] {\strut$s$};
   \draw[black!30] (.5,1.5)--(4.5,1.5);
   \fill (4,1.5) coordinate (z) circle[radius=1.5pt] node[font=\small,above] {\strut$z$};
   \coordinate (right) at (5,0);
   \coordinate (r) at (2.5,0);
   \coordinate (s) at (3.5,0);
   \draw[dashed] (r)--(z);
   \pic[draw,->,"$\alpha$", angle eccentricity=1.5] {angle = right--r--z};
   \draw[dashed] (s)--(z);
   \pic[draw,->,"$\beta$", angle eccentricity=1.5] {angle = right--s--z};
   
   \coordinate (0) at (0,0);
   \coordinate (x) at (1,0);
   \coordinate (zs) at (1.5,1.5);
   
   \fill[black!30] (zs) circle[radius=1.5pt] node[font=\small,above] {\strut$z'$};
   \draw[black!30] (0,0)--(1.8,1.8);
   \pic[black!30,draw,->,"$\alpha$", angle eccentricity=1.5] {angle = right--0--zs};
   
   \draw (0,3pt)--(0,-3pt) node[font=\small,below] {\strut $0$};
  \end{tikzpicture}\qquad\qquad \begin{tikzpicture}[>=latex]
   \draw[->] (0,0)--(4.5,0) node[font=\small,right] {\strut$\rr$};
   \draw[->] (0,0)--(0,2) node[font=\small,right] {\strut$\rr\cdot i$};
   \draw (2.5,3pt)--(2.5,-3pt) node[font=\small,below] {\strut$r$};
   \draw (3.5,3pt)--(3.5,-3pt) node[font=\small,below] {\strut$s$};
   \fill (4,1.5) coordinate (z) circle[radius=1.5pt] node[font=\small,above] {\strut$z$};
   \coordinate (right) at (5,0);
   \coordinate (r) at (2.5,0);
   \coordinate (s) at (3.5,0);
   \draw[dashed] (r)--(z);
   \pic[draw,->,"$\alpha$", angle eccentricity=1.5] {angle = right--r--z};
   \draw[dashed] (s)--(z);
   \pic[draw,->,"$\beta$", angle eccentricity=1.5] {angle = right--s--z};
   
   \coordinate (0) at (0,0);
   \coordinate (x) at (1,0);
   \coordinate (zs) at (1.5,1.5);

   \pic[draw,->,"$\alpha$", angle eccentricity=1.5] {angle = right--0--zs};
   \draw[dashed] (0)--(zs);
   \fill (zs) circle[radius=1.5pt] node[font=\small,above] {\strut$z'$};
   \draw (0,3pt)--(0,-3pt) node[font=\small,below] {\strut $0$};
   \draw (1,3pt)--(1,-3pt) node[font=\small,below] {\strut $x$};
   \pic[draw,->,"$\beta$", angle eccentricity=1.5] {angle = right--x--zs};
   \draw[dashed] (x)--(zs);
  \end{tikzpicture}
  
  
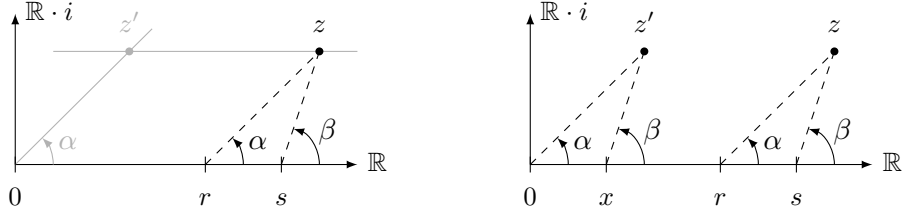
\captionof{figure}{The construction of $z'$. Note that the triangles are congruent. Thus, $s-r=x$.}
  
 \end{center}
 
\noindent By considering the point~$z''$ defined by
\[\{z''\}=\bigl(z+\rr\cdot\exp(i0)\bigr)\cap\bigl(0+\rr\cdot\exp(i\beta)\bigr)\]
 one obtains $r-s\in M_\rr$ in a similar fashion.
\end{proof}

\noindent A consequence of this lemma is the following explicit description of origami sets. We also obtain that origami sets are additive subgroups of~$\cc$.

\begin{theorem}\label{thm:mexplicit}
 $M$ is the $M_\rr$-span of\/ $1$ and $\llbracket0,1\rrbracket$, i.\,e.
 \[M=M_\rr+M_\rr\cdot \llbracket0,1\rrbracket = \bigl\{r+s\cdot \llbracket0,1\rrbracket: r,s\in M_\rr\bigr\}.\]
 Together with Lemma~\ref{lem:mrgroup} this shows that $M$ is an additive subgroup of~$\cc$.
\end{theorem}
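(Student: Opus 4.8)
The plan is to reduce everything to the coordinate description~\eqref{eq:m}, namely $M=\{\llbracket r,s\rrbracket : r,s\in M_\rr\}$, and then perform a change of coordinates from the ``axis'' description encoded by $\llbracket\cdot,\cdot\rrbracket$ to the basis $\{1,\llbracket 0,1\rrbracket\}$. The only tools required are the $\rr$-linearity of $\llbracket\cdot,\cdot\rrbracket$ from Lemma~\ref{lem:anglecoord}(c), the diagonal identification $\llbracket r,r\rrbracket=r$ from Lemma~\ref{lem:anglecoord}(b), and the fact that $M_\rr$ is an additive group (Lemma~\ref{lem:mrgroup}). I expect no genuine obstacle: the statement is a short computation once the right decomposition is spotted, and the whole content is that $1$ and $\llbracket 0,1\rrbracket$ furnish a more convenient basis of $\cc$ as a real vector space than $\llbracket 1,0\rrbracket$ and $\llbracket 0,1\rrbracket$.

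For the inclusion $M\subseteq M_\rr+M_\rr\cdot\llbracket 0,1\rrbracket$, I would take an arbitrary $z\in M$ and write $z=\llbracket r,s\rrbracket$ with $r=\alpha(z)$ and $s=\beta(z)$ in $M_\rr$, using~\eqref{eq:m}. Splitting off the diagonal part via additivity gives $\llbracket r,s\rrbracket=\llbracket r,r\rrbracket+\llbracket 0,s-r\rrbracket$; then Lemma~\ref{lem:anglecoord}(b) turns $\llbracket r,r\rrbracket$ into the real number $r$, while the scalar-multiplication rule turns $\llbracket 0,s-r\rrbracket$ into $(s-r)\cdot\llbracket 0,1\rrbracket$. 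Hence $z=r+(s-r)\cdot\llbracket 0,1\rrbracket$. Because $M_\rr$ is a group, both $r$ and $s-r$ lie in $M_\rr$, so $z$ has the desired form.

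For the reverse inclusion I would run the same computation backwards: given $r,s\in M_\rr$, linearity and Lemma~\ref{lem:anglecoord}(b) yield $r+s\cdot\llbracket 0,1\rrbracket=\llbracket r,r\rrbracket+\llbracket 0,s\rrbracket=\llbracket r,r+s\rrbracket$, and since $M_\rr$ is closed under addition we have $r+s\in M_\rr$, so $\llbracket r,r+s\rrbracket\in M$ by~\eqref{eq:m}. This establishes the claimed equality $M=M_\rr+M_\rr\cdot\llbracket 0,1\rrbracket$.

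Finally, to deduce that $M$ is an additive subgroup of~$\cc$ I would apply the subgroup test to the description just obtained: the difference of $r+s\cdot\llbracket 0,1\rrbracket$ and $r'+s'\cdot\llbracket 0,1\rrbracket$ equals $(r-r')+(s-s')\cdot\llbracket 0,1\rrbracket$, and $r-r',s-s'\in M_\rr$ again by the group property of $M_\rr$; together with $0\in M$ this closes the argument. The one place to stay careful is precisely this repeated appeal to Lemma~\ref{lem:mrgroup}, which guarantees that the coordinates $s-r$, $r+s$, and $s-s'$ never leave $M_\rr$ — everything else is just the bookkeeping of a linear change of basis.
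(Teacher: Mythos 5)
Your proof is correct and follows essentially the same route as the paper: both directions reduce to equation~\eqref{eq:m} plus the linearity and diagonal identities of Lemma~\ref{lem:anglecoord}, with Lemma~\ref{lem:mrgroup} keeping the coordinates $s-r$ and $r+s$ inside $M_\rr$. Your splitting $\llbracket r,s\rrbracket=\llbracket r,r\rrbracket+\llbracket 0,s-r\rrbracket$ is just a rearrangement of the paper's decomposition via $\llbracket 1,0\rrbracket=1-\llbracket 0,1\rrbracket$, and your explicit subgroup test merely spells out what the paper leaves as an immediate consequence.
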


\begin{proof}
 Let $z$ be an element of $M$. Then there are $r,s\in M_\rr$ with $z=\llbracket r,s\rrbracket$. Lemma~\ref{lem:anglecoord} gives
 \begin{eqnarray*}
 z&=&\llbracket r,0\rrbracket+\llbracket0,s\rrbracket=r\cdot\llbracket1,0\rrbracket+s\cdot\llbracket0,1\rrbracket\\
 &=&r\cdot\bigl(1-\llbracket0,1\rrbracket\bigr)+s\cdot\llbracket0,1\rrbracket=r\cdot 1 + (s-r)\cdot \llbracket0,1\rrbracket.
 \end{eqnarray*}
 Since $s-r\in M_\rr$ by Lemma~\ref{lem:mrgroup}, one obtains $z\in M_\rr+ M_\rr\cdot \llbracket0,1\rrbracket$.
 
 Conversely assume that $r,s\in M_\rr$. Then, again with Lemma~\ref{lem:anglecoord},
 \[r+s\cdot \llbracket0,1\rrbracket = \llbracket r,r\rrbracket+s\cdot \llbracket0,1\rrbracket=\llbracket r,r\rrbracket+\llbracket 0,s\rrbracket=\llbracket r,r+s\rrbracket.\]
 Since $r+s\in M_\rr$ by Lemma~\ref{lem:mrgroup}, equation~\eqref{eq:m} gives $r+s\cdot \llbracket0,1\rrbracket\in M$.
\end{proof}

\subsection*{The ring structure of ${M_\rr}$}

In this paragraph we show that $M_\rr$ is a subring of~$\rr$. The main technique we employ are coordinate transformations: Given any two angles $\gamma,\delta\in U$, we convert $(\alpha,\beta)$-coordinates into $(\gamma,\delta)$-coordinates and vice versa.\medskip

\noindent The following definition provides a notation that will become handy later on:

\begin{definition}
 Let $\gamma\in U\smallsetminus\{0\}$ be arbitrary. We denote the $\gamma$-projection of $\llbracket0,1\rrbracket$ with
 \[p(\gamma):=\gamma\bigl(\llbracket0,1\rrbracket\bigr).\]
 Equation~\eqref{eq:mr} shows that $p(\gamma)\in M_\rr$, Lemma~\ref{lem:anglecoord} that $p(\alpha)=0$ and $p(\beta)=1$. Note that if $\gamma\ne\delta$, then $p(\gamma)\ne p(\delta)$. Hence, the map $p:\; U\smallsetminus\{0\}\to M_\rr$ is injective.
 \begin{center}
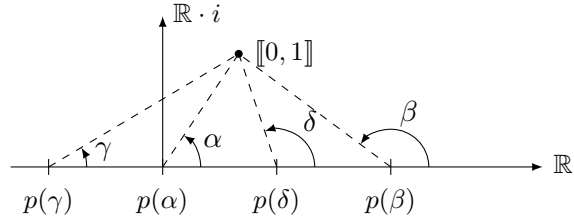

 \begin{tikzpicture}[>=latex]
  \draw[->] (-2,0)--(5,0) node[right,font=\small] {\strut$\rr$};
  \draw[->] (0,0)--(0,2) node[right,font=\small] {\strut$\rr\cdot i$};
  \fill (1,1.5) coordinate (z) circle[radius=1.5pt] node[right=2pt,font=\small] {\strut$\llbracket0,1\rrbracket$};
  \coordinate (right) at (4,0);
  \coordinate (0) at (0,0);
  \coordinate (1) at (3,0);
  \coordinate (gamma) at (-1.5,0);
  \coordinate (delta) at (1.5,0);
  \draw (0,3pt)--(0,-3pt) node[below,font=\small] {\strut$p(\alpha)$};
  \draw (3,3pt)--(3,-3pt) node[below,font=\small] {\strut$p(\beta)$};
  
  \pic[draw,->,"$\alpha$", angle eccentricity=1.5] {angle = right--0--z};
  \draw[dashed] (0)--(z);
   
   \pic[draw,->,"$\beta$", angle eccentricity=1.5] {angle = right--1--z};
   \draw[dashed] (1)--(z);
   
   \pic[draw,->,"$\gamma$", angle eccentricity=1.5] {angle = right--gamma--z};
   \draw[dashed] (gamma)--(z);
   \draw (-1.5,3pt)--(-1.5,-3pt) node[below,font=\small] {\strut$p(\gamma)$};
   
   \pic[draw,->,"$\delta$", angle eccentricity=1.5] {angle = right--delta--z};
   \draw[dashed] (delta)--(z);
   \draw (1.5,3pt)--(1.5,-3pt) node[below,font=\small] {\strut$p(\delta)$};
 \end{tikzpicture}
 \captionof{figure}{Different projections of $\llbracket 0,1\rrbracket$. Note that $p(\alpha)=0$ and $p(\beta)=1$.}
 \end{center}
\end{definition}

\noindent The following result describes how coordinates change when transitioning to different pairs of angles:

\begin{proposition}\label{prop:coordtransform}
 Let $\gamma,\delta\in U\smallsetminus\{0\}$ be two different angles. Then, for any $r,s\in M_\rr$, the following equations hold:
 \begin{enumerate}
  \item $\displaystyle \llbracket r,s\rrbracket=\bigl\llbracket r+(s-r)p(\gamma), r+(s-r)p(\delta)\bigr\rrbracket_{\gamma,\delta}$\,,
  \item $\displaystyle \llbracket r,s\rrbracket_{\gamma,\delta}=\Bigl\llbracket \frac{sp(\gamma)-rp(\delta)}{p(\gamma)-p(\delta)}, \frac{r-s+sp(\gamma)-rp(\delta)}{p(\gamma)-p(\delta)}\Bigr\rrbracket$.
 \end{enumerate}
\end{proposition}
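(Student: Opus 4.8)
The plan is to establish part~(a) directly by computing the $\gamma$- and $\delta$-projections of the point $z:=\llbracket r,s\rrbracket$, and then to deduce part~(b) purely algebraically by inverting the linear relations obtained in~(a). Both identities are genuine equalities of complex numbers, so the hypothesis $r,s\in M_\rr$ will play no role in the verification itself; it merely fixes the relevant domain.

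For~(a), the key observation is that $z$ admits a simple expression in terms of $1$ and $\llbracket0,1\rrbracket$. Exactly as in the proof of Theorem~\ref{thm:mexplicit}, Lemma~\ref{lem:anglecoord} gives $z=\llbracket r,s\rrbracket=r+(s-r)\llbracket0,1\rrbracket$. I then apply the projection $\gamma$ to both sides. Since $\gamma$ is $\rr$-linear (Lemma~\ref{lem:angleproj}(a)), fixes real numbers (Lemma~\ref{lem:angleproj}(c)), and satisfies $\gamma(\llbracket0,1\rrbracket)=p(\gamma)$ by the definition of $p$, this yields $\gamma(z)=r+(s-r)p(\gamma)$. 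The identical computation with $\delta$ in place of $\gamma$ gives $\delta(z)=r+(s-r)p(\delta)$. Finally, Lemma~\ref{lem:anglecoord}(a) applied in the $(\gamma,\delta)$-frame states $z=\llbracket\gamma(z),\delta(z)\rrbracket_{\gamma,\delta}$, and substituting the two projections just computed proves~(a).

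For~(b), I treat~(a) as a change-of-coordinates map and invert it. Let $a:=\alpha(w)$ and $b:=\beta(w)$ be the unknown $(\alpha,\beta)$-coordinates of $w:=\llbracket r,s\rrbracket_{\gamma,\delta}$, so that $w=\llbracket a,b\rrbracket$. Applying part~(a) with $(a,b)$ in the role of the $(\alpha,\beta)$-coordinates shows that the $(\gamma,\delta)$-coordinates of $w$ are $a+(b-a)p(\gamma)$ and $a+(b-a)p(\delta)$; but by construction these equal $r$ and $s$, so
\[ r=a+(b-a)\,p(\gamma),\qquad s=a+(b-a)\,p(\delta). \]
Subtracting the two equations gives $r-s=(b-a)\bigl(p(\gamma)-p(\delta)\bigr)$. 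Since $\gamma\ne\delta$ forces $p(\gamma)\ne p(\delta)$ by the injectivity of $p$ recorded after its definition, I may divide to recover $b-a$, and back-substitution then yields $a$ and $b$ individually; these are precisely the two fractions appearing in the statement.

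I expect no genuine obstacle here: both parts reduce to the $\rr$-linearity and the real-fixing property of the projections, together with the single nondegeneracy fact $p(\gamma)\ne p(\delta)$. The only point requiring care is the bookkeeping in the $2\times2$ inversion for~(b)—in particular, confirming that the numerator of the $\alpha$-coordinate collapses to $s\,p(\gamma)-r\,p(\delta)$ after the $r\,p(\gamma)$ terms cancel—but this is routine algebra once the nonsingularity of the system has been secured by $p(\gamma)\ne p(\delta)$.
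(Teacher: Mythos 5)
Your proof is correct and takes essentially the same route as the paper: your part~(a) rests on the decomposition $\llbracket r,s\rrbracket = r+(s-r)\llbracket0,1\rrbracket$ together with $\gamma\bigl(\llbracket0,1\rrbracket\bigr)=p(\gamma)$ and $\rr$-linearity, which is the same computation the paper performs dually by writing $\llbracket1,0\rrbracket$ and $\llbracket0,1\rrbracket$ in $(\gamma,\delta)$-coordinates and invoking linearity of $\llbracket\cdot,\cdot\rrbracket_{\gamma,\delta}$, and your part~(b) is exactly the paper's argument of inverting the $2\times2$ linear system from~(a) via $p(\gamma)\ne p(\delta)$. Your side remark that the identities hold for arbitrary real $r,s$, the hypothesis $r,s\in M_\rr$ playing no role in the verification, is likewise accurate.
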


\begin{proof}
\begin{enumerate}
 \item[(a)] We know that $p(\gamma)=\gamma\bigl(\llbracket0,1\rrbracket\bigr)$ and $p(\delta)=\delta\bigl(\llbracket0,1\rrbracket\bigr)$. Therefore we obtain $\llbracket0,1\rrbracket=\llbracket p(\gamma),p(\delta)\rrbracket_{\gamma,\delta}$. By Lemma~\ref{lem:anglecoord},
 \[\llbracket1,0\rrbracket=1-\llbracket0,1\rrbracket=\llbracket1,1\rrbracket_{\gamma,\delta}-\llbracket p(\gamma),p(\delta)\rrbracket_{\gamma,\delta}=\llbracket 1-p(\gamma),1-p(\delta)\rrbracket_{\gamma,\delta}.\]
 The representation of $\llbracket r,s\rrbracket$ in $(\gamma,\delta)$-coordinates is now due to linearity.
 \item[(b)] If $\llbracket r,s\rrbracket_{\gamma,\delta}=\llbracket x,y\rrbracket$ for some $x,y\in M_\rr$, then, by~(a), the variables~$x$ and~$y$ satisfy the linear equation system
 \[\left\{
 \begin{array}{lcl}
    x+(y-x)p(\gamma) &=& r\\
    x+(y-x)p(\delta) &=& s
 \end{array}
 \right\}.\]
 $\gamma\ne\delta$ implies $p(\gamma)\ne p(\delta)$ and, thus, the existence of a unique solution $(x,y)$ of the system. Solving for $x$ and $y$ gives the claim.
 \end{enumerate}
\end{proof}

\noindent The next lemmas show that all differences $p(\gamma)-p(\delta)$ and quotients $\bigl(p(\gamma)-p(\delta)\bigr)^{-1}$ are elements of $M_\rr$. For reasons of readability we introduce the following notation:

\begin{definition}
 We denote by $\Delta$ the set of all differences $p(\gamma)-p(\delta)$ where $\gamma$ and $\delta$ are two different elements of $U\smallsetminus\{0\}$, i.\,e.
 \[\Delta:=\{p(\gamma)-p(\delta) : \gamma,\delta\in U\smallsetminus\{0\}\text{ and } \gamma\ne\delta\}.\]
 It is $0\notin\Delta$. Therefore, we can define
 \[\Delta^{-1}:=\{d^{-1} : d\in \Delta\}=\Bigl\{\frac1{p(\gamma)-p(\delta)} : \gamma,\delta\in U\smallsetminus\{0\}\text{ and } \gamma\ne\delta\Bigr\}.\]
\end{definition}

\begin{lemma}
 Denote the subring of\/~$\rr$ generated by~$\Delta$ with $\zz[\Delta]$. Then $\zz[\Delta]\subseteq M_\rr$.
\end{lemma}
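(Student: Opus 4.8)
The plan is to reduce everything to a single multiplicative closure property of $M_\rr$: that multiplying an arbitrary element of $M_\rr$ by an arbitrary difference $p(\gamma)-p(\delta)\in\Delta$ keeps us inside $M_\rr$. Once this is established, the rest is formal. Indeed, $\zz[\Delta]$ consists of $\zz$-linear combinations of finite products of elements of $\Delta$; since $M_\rr$ already contains $\zz$ and is an additive group by Lemma~\ref{lem:mrgroup}, it suffices to show that every such product lies in $M_\rr$, and this will follow by induction on the number of factors from the closure property together with $1\in M_\rr$.

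So the heart of the argument is the claim: for every $m\in M_\rr$ and every pair of distinct $\gamma,\delta\in U\smallsetminus\{0\}$ one has $m\bigl(p(\gamma)-p(\delta)\bigr)\in M_\rr$. To prove this I would exploit the coordinate transformation of Proposition~\ref{prop:coordtransform}(a). The key observation is to examine the point $z:=\llbracket 0,m\rrbracket$, which lies in $M$ by~\eqref{eq:m} because $0,m\in M_\rr$. Specializing Proposition~\ref{prop:coordtransform}(a) with $r=0$ and $s=m$ expresses $z$ in $(\gamma,\delta)$-coordinates as $z=\llbracket m\,p(\gamma),\,m\,p(\delta)\rrbracket_{\gamma,\delta}$; that is, $\gamma(z)=m\,p(\gamma)$ and $\delta(z)=m\,p(\delta)$. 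Both of these are $\gamma$- respectively $\delta$-projections of a point of $M$, hence elements of $M_\rr$ by~\eqref{eq:mr}. Because $M_\rr$ is an additive group, their difference $\gamma(z)-\delta(z)=m\bigl(p(\gamma)-p(\delta)\bigr)$ again lies in $M_\rr$, which is exactly the claim.

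With the claim in hand I would finish as follows. Taking $m=1$ shows $\Delta\subseteq M_\rr$. For an arbitrary product $d_1\cdots d_n$ of elements of $\Delta$ I would argue by induction on $n$: the base case $d_n=d_n\cdot 1\in M_\rr$ holds since $1\in M_\rr$, and if $d_{k+1}\cdots d_n\in M_\rr$, then applying the claim with $m=d_{k+1}\cdots d_n$ and the factor $d_k\in\Delta$ yields $d_k\cdots d_n\in M_\rr$. Thus all finite products of elements of $\Delta$ lie in $M_\rr$, and since $M_\rr\supseteq\zz$ is closed under addition, so are all their $\zz$-linear combinations. As these exhaust $\zz[\Delta]$, we obtain $\zz[\Delta]\subseteq M_\rr$.

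The step I expect to be the main obstacle—indeed the only non-formal point—is realizing the product $m\bigl(p(\gamma)-p(\delta)\bigr)$ as a bona fide element of $M_\rr$. The difficulty is that $M_\rr$ is a priori only known to be an additive group, so multiplication must be manufactured geometrically; the trick is that a single well-chosen point $\llbracket 0,m\rrbracket$, read off in two different angle coordinate systems, turns a scaling by $m$ into a difference of two admissible projections, and differences come for free in an additive group. Everything else is bookkeeping built on Proposition~\ref{prop:coordtransform} and Lemma~\ref{lem:mrgroup}.
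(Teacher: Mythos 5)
Your proof is correct and is in essence the paper's own argument: the paper's decisive step is the identical computation $\gamma\bigl(\llbracket0,s\rrbracket\bigr)=p(\gamma)\cdot s$ for $s\in M_\rr$, used to show $p(\gamma)\cdot M_\rr\subseteq M_\rr$, after first identifying $\zz[\Delta]$ with $\zz\bigl[p(\gamma):\gamma\in U\smallsetminus\{0\}\bigr]$ via $p(\alpha)=0$. You merely fold the differences in at the projection step, subtracting the two projections $\gamma(z)$ and $\delta(z)$ of the single point $z=\llbracket0,m\rrbracket$ instead of normalizing against $p(\alpha)=0$ — equivalent bookkeeping given the additive group structure of $M_\rr$, so the two proofs coincide in substance.
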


\begin{proof}
 Let~$R$ denote the ring $\zz[p(\gamma):\gamma\in U\smallsetminus\{0\}]$. We first show that $R=\zz[\Delta]$:
 
 It is obvious that $Z[\Delta]\subseteq R$. To prove the opposite inclusion consider an arbitrary element~$r\in R$. By definition, $r$ is a sum of addents of the form
 \[z\cdot p(\gamma_1)\cdots p(\gamma_s)\quad\text{with } z\in\zz\text{ and } \gamma_1,\ldots,\gamma_s\in U\smallsetminus\{0\}.\]
 Since $p(\alpha)=0$ it follows
 \[z\cdot p(\gamma_1)\cdots p(\gamma_s)=z\cdot \bigl(p(\gamma_1)-p(\alpha)\bigr)\cdots\bigl(p(\gamma_s)-p(\alpha)\bigr)\in \zz[\Delta].\]
 This shows that $r\in\zz[\Delta]$.\medskip
 
 \noindent Now we show that $p(\gamma)\cdot M_\rr\subseteq M_\rr$ for all $\gamma\in U\smallsetminus\{0\}$:
 
 Choose $\gamma\in U\smallsetminus\{0\}$ and $s\in M_\rr$ arbitrarily. Then, one has $\llbracket0,s\rrbracket\in M$ and, hence, $\gamma\bigl(\llbracket0,s\rrbracket\bigr)\in M_\rr$. The assertion follows since $\gamma\bigl(\llbracket0,s\rrbracket\bigr)=p(\gamma)\cdot s$ by linearity.\medskip
 
 \noindent This proves the lemma: As $\zz$ is a subset of $M_\rr$, repeatingly applying the above result shows that $M_\rr$ contains all products
 \[z\cdot p(\gamma_1)\cdots p(\gamma_s)\quad\text{with } s\in\nn_0, z\in\zz\text{, and } \gamma_1,\ldots,\gamma_s\in U\smallsetminus\{0\}.\]
 As $M_\rr$ is additively closed, it follows $R\subseteq M_\rr$ and therefore $\zz[\Delta]\subseteq M_\rr$.
\end{proof}

\begin{lemma}\label{lem:zddsubmr}
 Denote the subring of\/~$\rr$ generated by $\Delta\cup\Delta^{-1}$ with $\zz[\Delta,\Delta^{-1}]$. Then $\zz[\Delta,\Delta^{-1}]\subseteq M_\rr$.
\end{lemma}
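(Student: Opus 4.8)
The plan is to reduce the whole statement to a single multiplicative closure property: that multiplication by any element of $\Delta$ \emph{or} of $\Delta^{-1}$ maps $M_\rr$ into itself. Once this is established, the claim follows by exactly the bookkeeping used in the previous lemma. Indeed, every element of $\zz[\Delta,\Delta^{-1}]$ is a finite $\zz$-linear combination of products $d_1\cdots d_m$ with each $d_i\in\Delta\cup\Delta^{-1}$; starting from an integer in $M_\rr$ (recall $\zz\subseteq M_\rr$ by Lemma~\ref{lem:mrgroup}) and multiplying successively by $d_1,\dots,d_m$ keeps us inside $M_\rr$, and additive closure of $M_\rr$ then absorbs the $\zz$-linear combination.

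The closure under multiplication by elements of $\Delta$ is already in hand: the previous lemma gives $p(\gamma)\cdot M_\rr\subseteq M_\rr$ for every $\gamma\in U\smallsetminus\{0\}$, and writing $p(\gamma)-p(\delta)$ as a difference together with the additive group structure of $M_\rr$ yields $\bigl(p(\gamma)-p(\delta)\bigr)\cdot M_\rr\subseteq M_\rr$. So the real work is to prove the analogous statement for inverses, namely
\[\frac{1}{p(\gamma)-p(\delta)}\cdot M_\rr\subseteq M_\rr\qquad\text{for all distinct }\gamma,\delta\in U\smallsetminus\{0\}.\]
This is the step I expect to be the crux, and the tool for it is the coordinate transformation of Proposition~\ref{prop:coordtransform}(b), which is precisely the place where an inverse of $p(\gamma)-p(\delta)$ appears.

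Concretely, I would first note that the description $M=\{\llbracket r,s\rrbracket_{\gamma,\delta}:r,s\in M_\rr\}$ holds verbatim in $(\gamma,\delta)$-coordinates, by exactly the argument leading to~\eqref{eq:m}: for $z\in M$ the projections $\gamma(z),\delta(z)$ are intersections of admissible lines and hence lie in $M_\rr$, while conversely $\llbracket r,s\rrbracket_{\gamma,\delta}$ with $r,s\in M_\rr$ is an intersection of admissible lines through points of $M$. Then, for an arbitrary $s\in M_\rr$, the point $\llbracket 0,s\rrbracket_{\gamma,\delta}$ lies in $M$, so both of its $(\alpha,\beta)$-coordinates lie in $M_\rr$ by~\eqref{eq:mr}. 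Proposition~\ref{prop:coordtransform}(b) computes these two coordinates as
\[\frac{s\,p(\gamma)}{p(\gamma)-p(\delta)}\qquad\text{and}\qquad\frac{s\,\bigl(p(\gamma)-1\bigr)}{p(\gamma)-p(\delta)},\]
and their difference is exactly $\dfrac{s}{p(\gamma)-p(\delta)}$. Since $M_\rr$ is an additive group, this difference lies in $M_\rr$, which is precisely the desired inverse-closure statement. Combining it with the $\Delta$-closure recalled above completes the proof.
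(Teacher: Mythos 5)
Your proof is correct, and it streamlines the inversion step relative to the paper. The paper proceeds in two stages with a case distinction: it first shows $p(\gamma)^{-1}\cdot M_\rr\subseteq M_\rr$ for $\gamma\in U\smallsetminus\{0,\alpha\}$ by taking the $\beta$-projection of $\llbracket r,0\rrbracket_{\gamma,\alpha}$, and only then obtains $\bigl(p(\gamma)-p(\delta)\bigr)^{-1}\cdot M_\rr\subseteq M_\rr$ by feeding the pre-scaled element $p(\gamma)^{-1}s$ into $\llbracket 0,p(\gamma)^{-1}s\rrbracket_{\gamma,\delta}$ and taking the $\alpha$-projection, treating $\gamma=\alpha$ separately via $\alpha\bigl(\llbracket -p(\delta)^{-1}s,0\rrbracket_{\gamma,\delta}\bigr)$. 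You instead observe that the two $(\alpha,\beta)$-coordinates of the single point $\llbracket0,s\rrbracket_{\gamma,\delta}\in M$ given by Proposition~\ref{prop:coordtransform}~(b) differ by exactly
\[\frac{s\,p(\gamma)}{p(\gamma)-p(\delta)}-\frac{s\,\bigl(p(\gamma)-1\bigr)}{p(\gamma)-p(\delta)}=\frac{s}{p(\gamma)-p(\delta)},\]
so the additive group structure of $M_\rr$ (Lemma~\ref{lem:mrgroup}) yields the inverse-closure in one stroke, uniformly in $\gamma$ and $\delta$, with no intermediate $p(\gamma)^{-1}$-closure and no case split. Both arguments rest on the same two pillars — Proposition~\ref{prop:coordtransform}~(b) and the fact that $\llbracket r,s\rrbracket_{\gamma,\delta}\in M$ whenever $r,s\in M_\rr$ — but you make the latter explicit (the $(\gamma,\delta)$-analogue of equation~\eqref{eq:m}), whereas the paper uses it tacitly; that is a point in your favor, since it is exactly the step that licenses applying projections to these auxiliary points. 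What the paper's longer route buys is the standalone statement $p(\gamma)^{-1}\cdot M_\rr\subseteq M_\rr$, but since that fact is not used elsewhere, your shorter argument loses nothing; the surrounding bookkeeping (reduction to closure of $M_\rr$ under multiplication by $\Delta\cup\Delta^{-1}$, then monomials and $\zz$-linear combinations) matches the paper's.
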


\begin{proof}
 We already know that $\zz[\Delta]\subseteq M_\rr$. By reasoning in the same fashion as above it suffices to show that $\bigl(p(\gamma)-p(\delta)\bigr)^{-1}\cdot M_\rr\subseteq M_\rr$ for any $\gamma,\delta\in U\smallsetminus\{0\}$ with $\gamma\ne\delta$. This is proved in two steps.\medskip
 
 \noindent First, we show that $p(\gamma)^{-1}\cdot M_\rr\subseteq M_\rr$ holds for any $\gamma\in U\smallsetminus\{0,\alpha\}$:
 
 Let $\gamma\in U\smallsetminus\{0,\alpha\}$ and $r\in M_\rr$ be arbitrary elements. Then $\llbracket r,0\rrbracket_{\gamma,\alpha}\in M$ and, thus, $\beta\bigl(\llbracket r,0\rrbracket_{\gamma,\alpha}\bigr)\in M_\rr$. By Proposition~\ref{prop:coordtransform}~(b),
 \[\beta\bigl(\llbracket r,0\rrbracket_{\gamma,\alpha}\bigr)=\frac{r-rp(\alpha)}{p(\gamma)-p(\alpha)}\stackrel{p(\alpha)=0}=\frac1{p(\gamma)}\cdot r\in M_\rr.\]
 
 \noindent Now, we show $\bigl(p(\gamma)-p(\delta)\bigr)^{-1}\cdot M_\rr\subseteq M_\rr$:
 
 Choose two different elements $\gamma,\delta\in U\smallsetminus\{0\}$ and $s\in M_\rr$ arbitrarily. Suppose that $\gamma\ne\alpha$. Then $p(\gamma)^{-1} s\in M_\rr$ and, thus, $\llbracket0,p(\gamma)^{-1} s\rrbracket_{\gamma,\delta}\in M$. By Proposition~\ref{prop:coordtransform}~(b),
 \[\alpha\bigl(\llbracket0,p(\gamma)^{-1} s\rrbracket_{\gamma,\delta}\bigr)=\frac{p(\gamma)^{-1}s\cdot p(\gamma)}{p(\gamma)-p(\delta)}=\frac1{p(\gamma)-p(\delta)}\cdot s\in M_\rr.\]
 If $\gamma=\alpha$, then $\delta\ne\alpha$. The claim follows similarly by considering~$\alpha\bigl(\llbracket -p(\delta)^{-1} s,0\rrbracket_{\gamma,\delta}\bigr)$.
\end{proof}

\noindent The following theorem explicitly describes the set $M_\rr$:

\begin{theorem}\label{thm:mrexplicit}
 The equality $M_\rr=\zz[\Delta,\Delta^{-1}]$ holds. This shows in particular that $M_\rr$ is a subring of~$\rr$.
\end{theorem}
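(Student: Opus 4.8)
The inclusion $\zz[\Delta,\Delta^{-1}]\subseteq M_\rr$ is already Lemma~\ref{lem:zddsubmr}, so the plan is to establish the reverse inclusion $M_\rr\subseteq\zz[\Delta,\Delta^{-1}]$. By~\eqref{eq:mr} it suffices to show that $\alpha(z),\beta(z)\in\zz[\Delta,\Delta^{-1}]$ for every $z\in M$, and since $M=\bigcup_k M_k$ I would prove this by induction on the construction level $k$. Before starting I would record one auxiliary identity: the computation in the proof of Theorem~\ref{thm:mexplicit} gives $z=\alpha(z)\cdot1+(\beta(z)-\alpha(z))\cdot\llbracket0,1\rrbracket$, so applying the $\rr$-linear map $\eta$ (with $\eta(1)=1$ by Lemma~\ref{lem:angleproj} and $\eta(\llbracket0,1\rrbracket)=p(\eta)$ by definition of $p$) yields
\[
\eta(z)=\alpha(z)+(\beta(z)-\alpha(z))\,p(\eta)\qquad(\eta\in U\smallsetminus\{0\}).
\]
As every $p(\eta)$ lies in $\zz[\Delta]$, this shows that once $\alpha(z),\beta(z)\in\zz[\Delta,\Delta^{-1}]$, all projections $\eta(z)$ automatically lie in $\zz[\Delta,\Delta^{-1}]$ as well; this is the device that makes the induction self-contained.

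The base case $k=0$ is immediate: for $z\in\{0,1\}\subseteq\rr$ Lemma~\ref{lem:angleproj}(c) gives $\alpha(z)=\beta(z)=z\in\zz$. For the inductive step, let $w\in M_k$ be the intersection of the line through $z\in M_{k-1}$ of slope $\gamma$ and the line through $z'\in M_{k-1}$ of slope $\delta$, with $\gamma\ne\delta$ in $U$. The decisive observation is that if $\gamma\ne0$ then $w$ and $z$ lie on one common line of slope $\gamma$, so $\gamma(w)=\gamma(z)$; likewise $\delta(w)=\delta(z')$ when $\delta\ne0$. In the generic case $\gamma,\delta\ne0$ this means $w=\llbracket\gamma(z),\delta(z')\rrbracket_{\gamma,\delta}$, where $\gamma(z),\delta(z')\in\zz[\Delta,\Delta^{-1}]$ by the induction hypothesis together with the auxiliary identity. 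Feeding these values into Proposition~\ref{prop:coordtransform}(b) expresses $\alpha(w)$ and $\beta(w)$ as fractions whose numerators are integer-polynomial expressions in $\gamma(z),\delta(z'),p(\gamma),p(\delta)$ — all elements of $\zz[\Delta,\Delta^{-1}]$ — and whose common denominator $p(\gamma)-p(\delta)\in\Delta$ is invertible in $\zz[\Delta,\Delta^{-1}]$ because $(p(\gamma)-p(\delta))^{-1}\in\Delta^{-1}$. Hence $\alpha(w),\beta(w)\in\zz[\Delta,\Delta^{-1}]$.

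It remains to treat the degenerate case in which one slope, say $\gamma$, equals $0$ (so $\delta\ne0$), where the $\gamma$-projection is undefined and Proposition~\ref{prop:coordtransform} does not directly apply. Here $w$ lies on the horizontal line through $z$, so $w=z+c$ for some real $c$, and the relation $\delta(w)=\delta(z')$ forces $c=\delta(z')-\delta(z)$; by the induction hypothesis and the auxiliary identity this difference lies in $\zz[\Delta,\Delta^{-1}]$, whence $\alpha(w)=\alpha(z)+c$ and $\beta(w)=\beta(z)+c$ do as well (the subcase $\delta=0$ is symmetric). This closes the induction, giving $M_\rr\subseteq\zz[\Delta,\Delta^{-1}]$ and thus the asserted equality; since $\zz[\Delta,\Delta^{-1}]$ is by construction a subring of $\rr$, so is $M_\rr$. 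I expect the main subtlety to lie not in the ring arithmetic but in the bookkeeping of the degenerate slopes $\gamma=0$ or $\delta=0$, and in the realization that one must route the argument through the intermediate projections $\gamma(z),\delta(z')$ rather than through $\alpha,\beta$ alone — precisely the gap the auxiliary identity fills.
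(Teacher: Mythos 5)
Your proof is correct and follows essentially the same route as the paper: Lemma~\ref{lem:zddsubmr} supplies one inclusion, and the reverse inclusion is proved by induction over the levels $M_k$, passing to $(\gamma,\delta)$-coordinates and back via Proposition~\ref{prop:coordtransform} (your auxiliary identity $\eta(z)=\alpha(z)+(\beta(z)-\alpha(z))p(\eta)$ is precisely part~(a) of that proposition, rederived). The only divergence is in the degenerate case $\gamma=0$, where your translation argument $w=z+c$ with $c=\delta(z')-\delta(z)$ is a mild streamlining of the paper's congruent-triangle invariant: it gives $\alpha(w)$ and $\beta(w)$ simultaneously and avoids both the paper's sub-case $\delta=\alpha$ and its final coordinate transformation for $\beta(z)$.
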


\begin{proof}
 Due to Lemma~\ref{lem:zddsubmr} we only have to show that $M_\rr\subseteq\zz[\Delta,\Delta^{-1}]$. This is accomplished inductively by showing that the projections $\alpha(M_k)$ and $\beta(M_k)$ are subsets of $\zz[\Delta,\Delta^{-1}]$; here, $M_k$ is defined as in Section~\ref{sec:intro}. Since $M=\bigcup_{k=0}^\infty M_k$, the claim follows from equation~\eqref{eq:mr}.\medskip
 
 \noindent If $k=0$, then $\alpha(M_k)=\beta(M_k)=M_k=\{0,1\}\subseteq\zz[\Delta,\Delta^{-1}]$. Now assume that for some $k\in\nn_0$ both $\alpha(M_k)$ and $\beta(M_k)$ are subsets of $\zz[\Delta,\Delta^{-1}]$. Let $z$ be any element of $M_{k+1}$. Then there exist elements $x,y\in M_k$ and angles $\gamma,\delta\in U$ with $\gamma\ne\delta$ such that
 \[\{z\}=\bigl(x+\rr\cdot\exp(i\gamma)\bigr)\cap\bigl(y+\rr\cdot\exp(i\delta)\bigr).\]
 \noindent Assume first, that both $\gamma\ne0$ and $\delta\ne0$. Then, by Proposition~\ref{prop:coordtransform}~(a),
 \[\gamma(x)=\alpha(x)+\bigl(\beta(x)-\alpha(x)\bigr)p(\gamma).\]
 As $\alpha(x)$ and $\beta(x)$ are elements of $\zz\bigl[\Delta,\Delta^{-1}\bigr]$ by induction, it follows that $\gamma(x)\in\zz[\Delta,\Delta^{-1}]$. A similar argument shows $\delta(y)\in\zz[\Delta,\Delta^{-1}]$. Since $z$ has $(\gamma,\delta)$-coordinates $\bigl(\gamma(x),\delta(y)\bigr)$, it follows from Proposition~\ref{prop:coordtransform}~(b) that
 \begin{eqnarray*}
 \llbracket\alpha(z),\beta(z)\rrbracket&=&z=\llbracket\gamma(x),\delta(y)\rrbracket_{\gamma,\delta}\\
 &=&\left\llbracket\frac{\delta(y)p(\gamma)-\gamma(x)p(\delta)}{p(\gamma)-p(\delta)},\frac{\gamma(x)-\delta(x)+\delta(y)p(\gamma)-\gamma(x)p(\delta)}{p(\gamma)-p(\delta)}\right\rrbracket.
 \end{eqnarray*}
 So $\alpha(z),\beta(z)\in\zz[\Delta,\Delta^{-1}]$.\medskip
 
 \noindent Now assume that $\gamma=0$. This implies $\delta\ne0$. Assume further that $\delta\ne\alpha$; if $\delta=\alpha$, then one can argue analogously using $\beta$ instead of $\alpha$.
 
 By considering $(\alpha,\delta)$-coordinates one obtains $\delta(x),\delta(y)\in\zz[\Delta,\Delta^{-1}]$ in a similar fashion as above. To show that $\alpha(z)\in\zz[\Delta,\Delta^{-1}]$ consider the line $x+\rr$:\smallskip
 
 \noindent\begin{minipage}{.57\textwidth}
  A point $p\in\cc$ lies on $x+\rr$ iff
  \[\alpha(p)-\delta(p)=\alpha(x)-\delta(x)\]
  because the triangle with vertices $\alpha(x)$, $x$, $\delta(x)$ is congruent to the triangle with vertices $\alpha(p)$, $p$, $\delta(p)$.
 \end{minipage}
 \begin{minipage}{.43\textwidth}
 \begin{center}
  \begin{tikzpicture}[>=latex]
   \draw[->] (-2,0)--(2,0) node[right,font=\small]{\strut$\rr$};
   \draw[->] (0,0)--(0,1.5) node[left,font=\small]{\strut$\rr\cdot i$};
   \draw (-2,1)--(2,1) node[right,font=\small] {\strut$x+\rr$};
   \coordinate (right) at (4,0);
   
   \fill (-1.5,1) coordinate (x) circle[radius=1.5pt] node[above,font=\small] {\strut$x$};
   \coordinate (ax) at (-1.7,0);
   \draw (-1.7,3pt)--(-1.7,-3pt) node[below,font=\small]{\strut$\alpha(x)$};
   \coordinate (bx) at (-.8,0);
   \draw (-.8,3pt)--(-.8,-3pt) node[below,font=\small]{\strut$\delta(x)$};
   \draw[dashed] (ax)--(x);
   \draw[dashed] (bx)--(x);
   
   \fill (.5,1) coordinate (p) circle[radius=1.5pt] node[above,font=\small] {\strut$p$};
   \coordinate (ap) at (.3,0);
   \draw (.3,3pt)--(.3,-3pt) node[below,font=\small]{\strut$\alpha(p)$};
   \coordinate (bp) at (1.3,0);
   \draw (1.3,3pt)--(1.3,-3pt) node[below,font=\small]{\strut$\delta(p)$};
   \draw[dashed] (ap)--(p);
   \draw[dashed] (bp)--(p);
  \end{tikzpicture}
  \end{center}\smallskip
\end{minipage}

\noindent By induction $\alpha(x)\in\zz[\Delta,\Delta^{-1}]$; thus, $\alpha(x)-\delta(x)\in \zz[\Delta,\Delta^{-1}]$. The definition of~$z$ shows $\delta(z)=\delta(y)$. Since $z\in x+\rr$, it follows
\[\alpha(z)=\underbrace{\delta(y)}_{\in\zz[\Delta,\Delta^{-1}]}+\underbrace{\bigl(\alpha(x)-\delta(x)\bigr)}_{\in\zz[\Delta,\Delta^{-1}]}\in \zz[\Delta,\Delta^{-1}].\]
By transforming the $(\alpha,\delta)$-coordinates of $z$ according to Proposition~\ref{prop:coordtransform}~(b) we finally see
\[\beta(z)=\frac{\alpha(z)-\delta(y)-\alpha(z)p(\delta)}{-p(\delta)}\in\zz[\Delta,\Delta^{-1}].\]

\noindent Due to the symmetry in $\gamma$ and $\delta$, the case $\delta=0$ can be reduced to the case $\gamma=0$. This finishes the proof.
\end{proof}

\begin{example}\label{ex:mr}
 If $U$ contains exactly three elements, then we can write $U=\{0,\alpha,\beta\}$. It follows
 \[\Delta=\bigl\{p(\alpha)-p(\beta),p(\beta)-p(\alpha)\bigr\}=\{1,-1\}\]
 and, subsequently, $\Delta^{-1}=\Delta$. Thus, we end up with $M_\rr=\zz\bigl[\Delta,\Delta^{-1}\bigr]=\zz$.
 
 If $U$ contains exactly four elements, then we can write $U=\{0,\alpha,\beta,\gamma\}$. It follows
 \begin{eqnarray*}
\Delta&=&\bigl\{p(\alpha)-p(\beta),p(\alpha)-p(\gamma),p(\beta)-p(\alpha),p(\beta)-p(\gamma),p(\gamma)-p(\alpha),p(\gamma)-p(\beta)\big\}\\
&=&\bigl\{-1,-p(\gamma),1,1-p(\gamma),p(\gamma),p(\gamma)-1\}.
\end{eqnarray*}
Thus, we obtain $M_\rr=\zz\bigl[\Delta,\Delta^{-1}\bigr]=\zz\bigl[p(\gamma),\frac1{p(\gamma)},\frac1{p(\gamma)-1}\bigr]$.
\end{example}

\subsection*{Origami rings}

\noindent In this paragraph we give several criteria for an origami set to be an origami ring. We start with a technical lemma:

\begin{lemma}\label{lem:explicitcomputations}
 \begin{enumerate}
  \item The equalities
  \[\llbracket0,1\rrbracket=-\frac{\cos\alpha\cdot\sin\beta}{\sin(\alpha-\beta)}-i\cdot \frac{\sin\alpha\sin\beta}{\sin(\alpha-\beta)}\quad\text{and, thus,}\quad \bigl|\llbracket0,1\rrbracket\bigr|^2=\frac{\sin^2\beta}{\sin^2(\alpha-\beta)}\]
  hold where $i$ denotes the imaginary unit.
  \item It is $\llbracket0,1\rrbracket\cdot \llbracket1,0\rrbracket=\left\llbracket\frac{\sin^2\beta}{\sin^2(\alpha-\beta)},\frac{\sin^2\alpha}{\sin^2(\alpha-\beta)}\right\rrbracket$.
  \item For any $\gamma\in U\smallsetminus\{0\}$ one has
  $p(\gamma)=\frac{\sin(\alpha-\gamma)\cdot\sin\beta}{\sin(\alpha-\beta)\cdot\sin\gamma}$.
 \end{enumerate}
 \noindent Note that all of the above quotients are defined: Since $\alpha,\beta,\gamma\in\mathopen]0,\pi[$ with $\alpha\ne\beta$, it follows $\sin(\alpha-\beta)\ne0$ and $\sin\gamma\ne0$.
\end{lemma}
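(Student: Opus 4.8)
The whole lemma rests on one explicit description of the projection map, which I would establish first. Writing an arbitrary $z\in\cc$ as $z=x+iy$ with $x,y\in\rr$, the line $z+\rr\cdot\exp(i\alpha)$ meets the real axis at parameter $t=-y/\sin\alpha$, hence at the real point $x-y\cot\alpha$. This yields the coordinate formula
\[\alpha(z)=x-y\cot\alpha,\]
which I would use throughout; it is of course compatible with the linearity and idempotency recorded in Lemma~\ref{lem:angleproj}. The non-vanishing of the denominators $\sin(\alpha-\beta)$ and $\sin\gamma$ is exactly the remark at the end of the statement ($\alpha\ne\beta$ and $\alpha,\beta,\gamma\in\mathopen]0,\pi[$), so all quotients below are legitimate.

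For part~(a) I would determine $\llbracket0,1\rrbracket=x+iy$ by solving its defining system $\alpha(z)=0$, $\beta(z)=1$, that is $x-y\cot\alpha=0$ and $x-y\cot\beta=1$. Subtracting the two equations and using the cotangent-difference identity $\cot\alpha-\cot\beta=-\sin(\alpha-\beta)/(\sin\alpha\sin\beta)$ gives $y=-\sin\alpha\sin\beta/\sin(\alpha-\beta)$, and then $x=y\cot\alpha=-\cos\alpha\sin\beta/\sin(\alpha-\beta)$, which is precisely the claimed value. The modulus drops out at once: $|\llbracket0,1\rrbracket|^2=x^2+y^2=\bigl(\sin^2\beta/\sin^2(\alpha-\beta)\bigr)(\cos^2\alpha+\sin^2\alpha)$. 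Part~(c) is then a one-line application of the same coordinate formula: $p(\gamma)=\gamma(\llbracket0,1\rrbracket)=x-y\cot\gamma$; substituting the values of $x$ and $y$ from~(a), factoring out $\sin\beta/\sin(\alpha-\beta)$, and recognizing $\sin\alpha\cos\gamma-\cos\alpha\sin\gamma=\sin(\alpha-\gamma)$ gives the stated expression.

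Part~(b) is where I expect the only real friction, and the device that removes it is passing to polar form. The condition $\alpha(\llbracket0,1\rrbracket)=0$ says that $\llbracket0,1\rrbracket$ lies on the ray $\rr\cdot\exp(i\alpha)$, so together with~(a) one has $\llbracket0,1\rrbracket=-\tfrac{\sin\beta}{\sin(\alpha-\beta)}\exp(i\alpha)$; symmetrically, using $\llbracket1,0\rrbracket=1-\llbracket0,1\rrbracket$ from Lemma~\ref{lem:anglecoord} (or by solving the system $\alpha(z)=1$, $\beta(z)=0$ directly), one gets $\llbracket1,0\rrbracket=\tfrac{\sin\alpha}{\sin(\alpha-\beta)}\exp(i\beta)$. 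Multiplying, the product is a real multiple of $\exp(i(\alpha+\beta))$, namely $\rho\,\exp(i(\alpha+\beta))$ with $\rho=-\sin\alpha\sin\beta/\sin^2(\alpha-\beta)$. Applying $\alpha(z)=x-y\cot\alpha$ to $z=\rho\exp(i(\alpha+\beta))$ collapses, via $\cos(\alpha+\beta)\sin\alpha-\sin(\alpha+\beta)\cos\alpha=\sin(-\beta)$, to $\alpha(z)=-\rho\sin\beta/\sin\alpha=\sin^2\beta/\sin^2(\alpha-\beta)$, and the analogous computation at $\beta$ gives $\sin^2\alpha/\sin^2(\alpha-\beta)$, which is the asserted pair of $(\alpha,\beta)$-coordinates.

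The main obstacle, if one instead expanded $\llbracket0,1\rrbracket\cdot(1-\llbracket0,1\rrbracket)$ directly into real and imaginary parts, is the bookkeeping of the resulting double-angle terms; the polar-form rewriting — which is itself just a repackaging of~(a) — is exactly what keeps the computation short and turns~(b) into two applications of the coordinate formula established at the outset.
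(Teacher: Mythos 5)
Your proof is correct, and it is organized differently from the paper's. The paper treats each part as a fresh line-intersection problem: for~(a) it solves the parametric system $\lambda\exp(i\alpha)=1+\mu\exp(i\beta)$ in real and imaginary parts, and for~(b) and~(c) it again intersects the relevant lines with the real axis and solves the resulting $2\times2$ systems, with~(b) in particular requiring a Cartesian expansion of the product $\llbracket0,1\rrbracket\cdot\bigl(1-\llbracket0,1\rrbracket\bigr)$. You instead front-load a single closed-form projection formula $\alpha(z)=x-y\cot\alpha$, after which (a) and (c) reduce to substitution plus the identities $\cot\alpha-\cot\beta=-\sin(\alpha-\beta)/(\sin\alpha\sin\beta)$ and $\sin\alpha\cos\gamma-\cos\alpha\sin\gamma=\sin(\alpha-\gamma)$, both of which you apply correctly. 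The genuinely different ingredient is your treatment of~(b): you observe that $\alpha\bigl(\llbracket0,1\rrbracket\bigr)=0$ and $\beta\bigl(\llbracket1,0\rrbracket\bigr)=0$ force the polar factorizations $\llbracket0,1\rrbracket=-\frac{\sin\beta}{\sin(\alpha-\beta)}\exp(i\alpha)$ and $\llbracket1,0\rrbracket=\frac{\sin\alpha}{\sin(\alpha-\beta)}\exp(i\beta)$, so the product is the single term $\rho\exp\bigl(i(\alpha+\beta)\bigr)$ and both projections collapse via one angle-difference identity each --- I verified that this yields exactly the claimed coordinates $\sin^2\beta/\sin^2(\alpha-\beta)$ and $\sin^2\alpha/\sin^2(\alpha-\beta)$. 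What each approach buys: the paper's is uniform (every assertion is literally a line intersection, needing no auxiliary formula), while yours trades that uniformity for a reusable projection formula and a polar-form shortcut that eliminates the double-angle bookkeeping the paper's route for~(b) silently absorbs; your version of~(b) is arguably the more transparent of the two.
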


\begin{proof}
 By definition, $\bigl\{\llbracket0,1\rrbracket\bigr\} = \bigl(0+\rr\cdot\exp(i\alpha)\bigr)\cap\bigl(1+\rr\cdot\exp(i\beta)\bigr)$. By solving the equation
 \[\lambda\cdot\exp(i\alpha)=1+\mu\cdot\exp(i\beta) \iff
 \left\{\begin{array}{lcl}
 \lambda\cdot\cos\alpha & = & 1+\mu\cdot\cos\beta\\
 \lambda\cdot\sin\alpha & = & \mu\cdot\sin\beta
 \end{array}\right\}\]
 one obtains the expression for $\llbracket0,1\rrbracket$ given above. The claim about $|\llbracket0,1\rrbracket\bigr|^2$ follows from this and proves~(a).
 
 To show~(b), one uses~(a) and computes $z:=\llbracket0,1\rrbracket\cdot\llbracket1,0\rrbracket=\llbracket0,1\rrbracket\cdot\bigl(1-\llbracket0,1\rrbracket\bigr)$. The intersections
 \[\rr\cap\bigl(z+\rr\cdot\exp(i\alpha)\bigr)\qquad\text{and}\qquad\rr\cap\bigl(z+\rr\cdot\exp(i\beta)\bigr)\]
 now give the $\alpha$- and $\beta$-projection of~$z$, respectively.
 
 The definition of $p(\gamma)$ shows $\{p(\gamma)\}=\rr\cap\bigl(\llbracket0,1\rrbracket+\rr\cdot\exp(i\gamma)$. Using~(a) and solving the system
 \[\left\{\begin{array}{lcl}
 \lambda&=& -\frac{\cos\alpha\cdot\sin\beta}{\sin(\alpha-\beta)}+\mu\cos\gamma\\
0&=& -\frac{\sin\alpha\sin\beta}{\sin(\alpha-\beta)} + \mu\sin\gamma
\end{array}\right\}\]
shows~(c).
\end{proof}

\noindent We can now state our main result:

\begin{theorem}\label{thm:ring}
 For an origami set $M$ the following statements are equivalent:
 \begin{enumerate}
  \item $M$ is an origami ring.
  \item The complex number $\llbracket0,1\rrbracket$ is integral over $M_\rr$ of degree two, i.\,e. there exists a monic irreducible quadratic polynomial $f\in M_\rr[X]$ such that $f\bigl(\llbracket0,1\rrbracket)=0$.
  \item Both $\frac{\sin^2\beta}{\sin^2(\alpha-\beta)}$ and $2\cdot \frac{\cos\alpha\cdot\sin\beta}{\sin(\alpha-\beta)}$ are elements of $M_\rr$.
  \item Both $\frac{\sin^2\alpha}{\sin^2(\alpha-\beta)}$ and $\frac{\sin^2\beta}{\sin^2(\alpha-\beta)}$ are elements of $M_\rr$.
  \item $\llbracket0,1\rrbracket\cdot \llbracket1,0\rrbracket$ is an element of $M$.
 \end{enumerate}
\end{theorem}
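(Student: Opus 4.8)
The plan is to route all five statements through the single observation that $M$ is a ring if and only if $\llbracket0,1\rrbracket^2\in M$. By Theorem~\ref{thm:mexplicit}, $M$ is an additive group with $M=M_\rr+M_\rr\cdot\llbracket0,1\rrbracket$ and $1\in M$; since $M_\rr$ is a ring by Theorem~\ref{thm:mrexplicit}, $M$ is a ring exactly when it is closed under multiplication. Expanding a product $\bigl(r+s\llbracket0,1\rrbracket\bigr)\bigl(r'+s'\llbracket0,1\rrbracket\bigr)$ with $r,s,r',s'\in M_\rr$ gives a summand in $M_\rr$, a summand in $M_\rr\cdot\llbracket0,1\rrbracket$, and the summand $ss'\llbracket0,1\rrbracket^2$; the first two already lie in $M$, so multiplicative closure is equivalent to $\llbracket0,1\rrbracket^2\in M$. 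I would establish this reduction first and then hang every remaining equivalence on it.

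For (a)$\Leftrightarrow$(b): the relation $\llbracket0,1\rrbracket^2\in M=M_\rr+M_\rr\cdot\llbracket0,1\rrbracket$ means $\llbracket0,1\rrbracket^2=c+d\,\llbracket0,1\rrbracket$ for some $c,d\in M_\rr$, i.e. $\llbracket0,1\rrbracket$ is a root of the monic quadratic $X^2-dX-c\in M_\rr[X]$, and conversely any such monic quadratic relation yields $\llbracket0,1\rrbracket^2\in M$. By Lemma~\ref{lem:explicitcomputations}(a) the imaginary part of $\llbracket0,1\rrbracket$ is $-\frac{\sin\alpha\sin\beta}{\sin(\alpha-\beta)}\ne0$, so $\llbracket0,1\rrbracket\notin\rr$; hence it is a root of no linear polynomial over $M_\rr\subseteq\rr$ and its conjugate root is likewise non-real, which forces the quadratic to be irreducible. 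For (a)$\Leftrightarrow$(e): since $\llbracket1,0\rrbracket=1-\llbracket0,1\rrbracket$ (Lemma~\ref{lem:anglecoord}), one has $\llbracket0,1\rrbracket\cdot\llbracket1,0\rrbracket=\llbracket0,1\rrbracket-\llbracket0,1\rrbracket^2$, and because $\llbracket0,1\rrbracket\in M$ and $M$ is an additive group, this product lies in $M$ iff $\llbracket0,1\rrbracket^2$ does. For (e)$\Leftrightarrow$(d): Lemma~\ref{lem:explicitcomputations}(b) gives the $(\alpha,\beta)$-coordinates of $\llbracket0,1\rrbracket\cdot\llbracket1,0\rrbracket$ as precisely the two quotients appearing in (d), and by equation~\eqref{eq:m} a number $\llbracket r,s\rrbracket$ belongs to $M$ iff $r,s\in M_\rr$.

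It remains to treat (b)$\Leftrightarrow$(c). The minimal polynomial of $\llbracket0,1\rrbracket$ over $\rr$ is $X^2-2\,\RE(\llbracket0,1\rrbracket)\,X+\bigl|\llbracket0,1\rrbracket\bigr|^2$. Since $\llbracket0,1\rrbracket\notin\rr$, every monic quadratic in $M_\rr[X]\subseteq\rr[X]$ vanishing at $\llbracket0,1\rrbracket$ must coincide with this minimal polynomial, so (b) holds iff both $-2\,\RE(\llbracket0,1\rrbracket)$ and $\bigl|\llbracket0,1\rrbracket\bigr|^2$ lie in $M_\rr$; substituting the explicit values from Lemma~\ref{lem:explicitcomputations}(a) turns this into (c) verbatim. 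One may optionally record the trigonometric identity $\frac{\sin^2\alpha}{\sin^2(\alpha-\beta)}=1+2\,\frac{\cos\alpha\sin\beta}{\sin(\alpha-\beta)}+\frac{\sin^2\beta}{\sin^2(\alpha-\beta)}$, which makes the equivalence of the differing halves of (c) and (d) transparent, but it is not needed once the loop above is closed. The main obstacle is the careful handling of condition (b): one must argue that the non-realness of $\llbracket0,1\rrbracket$ simultaneously forces any satisfied monic quadratic to be irreducible and to be exactly the $\rr$-minimal polynomial, so that its coefficients are pinned to $-2\,\RE(\llbracket0,1\rrbracket)$ and $\bigl|\llbracket0,1\rrbracket\bigr|^2$. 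Once that is settled, everything else is direct substitution via Lemma~\ref{lem:explicitcomputations} and equation~\eqref{eq:m}.
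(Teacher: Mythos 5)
Your proposal is correct, and every ingredient it uses is sound: the hub reduction ``$M$ is a ring iff $\llbracket0,1\rrbracket^2\in M$'', the minimal-polynomial pinning for (b)$\Leftrightarrow$(c), and the coordinate reading of Lemma~\ref{lem:explicitcomputations}(b) together with~\eqref{eq:m} for (e)$\Leftrightarrow$(d) all check out. Your organization, however, differs genuinely from the paper's: the paper proves a single one-directional cycle (a)$\Rightarrow$(b)$\Rightarrow$(c)$\Rightarrow$(d)$\Rightarrow$(e)$\Rightarrow$(a), whereas you prove a hub-and-spoke diagram of bidirectional equivalences, each criterion tied back to $\llbracket0,1\rrbracket^2\in M$. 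This buys two things: each of (b)--(e) gets a self-contained, reversible proof of equivalence with ring-ness, and the angle-difference identity $\frac{\sin^2\alpha}{\sin^2(\alpha-\beta)}=1+2\frac{\cos\alpha\sin\beta}{\sin(\alpha-\beta)}+\frac{\sin^2\beta}{\sin^2(\alpha-\beta)}$ becomes optional, while in the paper it is the only bridge (c)$\Rightarrow$(d) between the two trigonometric conditions. The cost is that you prove slightly more implications than the minimal cycle requires, and you do not actually escape the paper's one genuinely computational step: the hard direction of your hub reduction (from $\llbracket0,1\rrbracket^2\in M$ to multiplicative closure) is exactly the paper's (e)$\Rightarrow$(a) expansion. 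On that point your write-up is compressed a notch too far: from the expansion $(r+s\llbracket0,1\rrbracket)(r'+s'\llbracket0,1\rrbracket)=rr'+(rs'+sr')\llbracket0,1\rrbracket+ss'\llbracket0,1\rrbracket^2$ you conclude closure from $\llbracket0,1\rrbracket^2\in M$ alone, but what is needed is $ss'\llbracket0,1\rrbracket^2\in M$; this follows in one line by writing $\llbracket0,1\rrbracket^2=c+d\,\llbracket0,1\rrbracket$ with $c,d\in M_\rr$ and invoking the ring structure of $M_\rr$ (Theorem~\ref{thm:mrexplicit}), which you have already cited, so it is an omission of a sentence rather than of an idea --- but it should be stated, since without it the displayed equivalence claim is not literally justified.
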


\begin{proof}
 For the sake of readability, we set $e:=\llbracket0,1\rrbracket$. Then $\llbracket1,0\rrbracket=1-e$.\smallskip
 
 \noindent Assume~(a). Then $e^2\in M$. By Theorem~\ref{thm:mexplicit}, there exist $r,s\in M_\rr$ with $e^2=r+se$. Thus, the monic quadratic polynomial $f:=X^2-sX-r\in M_\rr[X]$ has~$e$ as a zero. Since~$e$ is not a real number, $f$ is irreducible. This shows~(b).
 
 Assume~(b). Since~$f$ is a real polynomial, the complex conjugate~$\overline e$ of~$e$ is also a zero of~$f$. It follows
 \[f=(X-e)(X-\overline e) = X^2-2\RE(e)\cdot X + |e|^2\in M_\rr[X].\]
 So, $2\RE(e), |e|^2\in M_\rr$. Lemma~\ref{lem:explicitcomputations}~(a) now shows~(c).
 
 Assume~(c). Using the angle difference identities one obtains
 \[\frac{\sin^2\alpha}{\sin^2(\alpha-\beta)}=1+2\frac{\cos\alpha\cdot\sin\beta}{\sin(\alpha-\beta)}+\frac{\sin^2\beta}{\sin^2(\alpha-\beta)}.\]
 Since $M_\rr$ is additively closed, this shows $\frac{\sin^2\alpha}{\sin^2(\alpha-\beta)}\in M_\rr$ and proves~(d).
 
 Assume~(d). Then the $(\alpha,\beta)$-coordinates of $e(1-e)$ are elements of $M_\rr$ by Lemma~\ref{lem:explicitcomputations}~(b). This shows $e(1-e)\in M$ and proves~(e).
 
 Assume~(e). Since $e\in M$, the additive group structure of $M$ gives
 \[e^2=e-e+e^2=e-e(1-e)\in M.\]
 Therefore there are $r,s\in M_\rr$ such that $e^2=\llbracket r,s\rrbracket$. To prove that $M$ is a subring of~$\cc$ we only have to show that $M$ is multiplicatively closed. To this end, let $x,y$ be arbitrary elements of~$M$. By Theorem~\ref{thm:mexplicit}, there exist $a,b,c,d\in M_\rr$ such that $x=a+be$ and $y=c+de$. It follows
 \begin{eqnarray*}
 xy&=&ac + (ad+bc)\cdot e+bd\cdot e^2 = ac\cdot\llbracket1,1\rrbracket+(ad+bc)\cdot\llbracket0,1\rrbracket+bd\cdot\llbracket r,s\rrbracket\\
 &\stackrel{\text{Linearity}}=& \llbracket ac+bdr, ac+ad+bc+bds\rrbracket.
 \end{eqnarray*}
 Due to the ring structure of $M_\rr$ both the $\alpha$- and the $\beta$-projection of $xy$ are elements of~$M_\rr$. This shows $xy\in M$ and proves~(a).
\end{proof}

\noindent We discuss the situation when additional angles are added to the set~$U$:

If the set $U'\subseteq[0,\pi[$ contains~$U$, then obviously $M\subseteq M(U')$. Choosing $\alpha,\beta\in U$ and using criterion~(e) of Theorem~\ref{thm:ring} readily yields

\begin{corollary}\label{cor:ringstable}
 If $M$ is an origami ring and if $U'\subseteq[0,\pi[$ contains $U$, then $M(U')$ is an origami ring, too. Hence, the ring property of $M$ is preserved under extensions of~$U$.
\end{corollary}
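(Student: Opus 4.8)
The plan is to lean entirely on the equivalence (a)~$\Leftrightarrow$~(e) of Theorem~\ref{thm:ring}, exploiting the fact that criterion~(e) is phrased in terms of the single complex number $\llbracket0,1\rrbracket\cdot\llbracket1,0\rrbracket$, which depends only on the chosen angles $\alpha,\beta$ and not on the ambient slope set. First I would record the elementary set-up: since $U\subseteq U'$ we have $0\in U'$ and $|U'|\geq|U|\geq3$, so $M(U')$ is a genuine origami set to which Theorem~\ref{thm:ring} applies; moreover $\alpha,\beta\in U\smallsetminus\{0\}\subseteq U'\smallsetminus\{0\}$, so the very same pair of angles — and hence the very same bracket notation $\llbracket\cdot,\cdot\rrbracket=\llbracket\cdot,\cdot\rrbracket_{\alpha,\beta}$ — may be used when testing both $M$ and $M(U')$ against the criterion.

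Next I would invoke the hypothesis: because $M$ is an origami ring, criterion~(e) of Theorem~\ref{thm:ring} yields $\llbracket0,1\rrbracket\cdot\llbracket1,0\rrbracket\in M$. The inclusion $M\subseteq M(U')$, which holds because every line admissible for the slopes~$U$ is also admissible for~$U'$ and the recursive construction $M_k\subseteq M_k(U')$ only grows when $U$ grows, then immediately gives $\llbracket0,1\rrbracket\cdot\llbracket1,0\rrbracket\in M(U')$. Applying criterion~(e) of Theorem~\ref{thm:ring} in the reverse direction — now to the origami set $M(U')$ with the same angles $\alpha,\beta$ — shows that $M(U')$ is an origami ring, which is exactly the assertion.

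There is essentially no hard step here; the only point requiring care is conceptual rather than computational, namely that the product $\llbracket0,1\rrbracket\cdot\llbracket1,0\rrbracket$ is intrinsic to $\alpha$ and $\beta$ and is therefore literally the same complex number in both invocations of the theorem. One should simply confirm that the two angles $\alpha,\beta$ survive the passage from $U$ to $U'$ (they do, since $U\subseteq U'$) so that criterion~(e) is legitimately available for $M(U')$; everything else reduces to the monotonicity $M\subseteq M(U')$ combined with the already-established equivalence.
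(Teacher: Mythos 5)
Your proposal is correct and follows exactly the paper's argument: the paper also deduces the corollary by choosing $\alpha,\beta\in U$, noting $M\subseteq M(U')$, and applying criterion~(e) of Theorem~\ref{thm:ring} in both directions. Your added care in checking that the same pair $\alpha,\beta$ remains available in $U'$ (so that $\llbracket0,1\rrbracket\cdot\llbracket1,0\rrbracket$ is literally the same number) makes explicit a point the paper leaves implicit, but it is the same proof.
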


\noindent The next result deals with the question whether every origami set  is a subset of an origami ring:

\begin{corollary}
 If the set~$U$ of prescribed slopes contains $\frac\pi3$ and $\frac{2\pi}3$, then $M$ is an origami ring.
 
 This and Corollary~\ref{cor:ringstable} show: By allowing at most two additional slopes, every origami ring “extends” to an origami ring. In particular, every origami set is contained in an origami ring.
\end{corollary}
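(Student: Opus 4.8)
The plan is to verify criterion~(d) of Theorem~\ref{thm:ring} for a convenient choice of the auxiliary angles $\alpha$ and $\beta$. Since the origami set $M$ and its real part $M_\rr$ depend only on $U$ and not on the particular angles used to set up $(\alpha,\beta)$-coordinates, I am free to specialize $\alpha:=\frac\pi3$ and $\beta:=\frac{2\pi}3$; both lie in $U\smallsetminus\{0\}$ by hypothesis and are distinct, so they constitute an admissible pair, and Theorem~\ref{thm:ring} applies verbatim to this choice.

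With these angles the sine ratios occurring in the ring criterion collapse to integers. I would record that $\sin\alpha=\sin\beta=\frac{\sqrt3}2$ and $\alpha-\beta=-\frac\pi3$, so that $\sin^2(\alpha-\beta)=\frac34$. Consequently
\[
\frac{\sin^2\alpha}{\sin^2(\alpha-\beta)}=\frac{\sin^2\beta}{\sin^2(\alpha-\beta)}=\frac{3/4}{3/4}=1.
\]
Both quantities equal $1$, and since $\zz\subseteq M_\rr$ by Lemma~\ref{lem:mrgroup}, they lie in $M_\rr$. This is exactly criterion~(d), so Theorem~\ref{thm:ring} immediately yields that $M$ is an origami ring.

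For the concluding remark I would argue as follows. Given an arbitrary slope set $U$, put $U':=U\cup\{\frac\pi3,\frac{2\pi}3\}$; this adjoins at most two new angles. As $U'$ contains both $\frac\pi3$ and $\frac{2\pi}3$, the first part of the corollary shows that $M(U')$ is an origami ring, while $U\subseteq U'$ forces $M=M(U)\subseteq M(U')$. Hence every origami set sits inside an origami ring; Corollary~\ref{cor:ringstable} moreover ensures that once the ring property has been secured, any further enlargement of the slope set keeps it intact, so the passage from an origami ring to a larger origami ring is automatic.

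There is essentially no analytic obstacle here: the entire argument rests on the single observation that the symmetric angles $\frac\pi3$ and $\frac{2\pi}3$ render every sine ratio in the ring criterion equal to $1$. The only point requiring care is the already-established fact that $M_\rr$ is intrinsic to $U$, which is what licenses the specialization $\alpha=\frac\pi3$, $\beta=\frac{2\pi}3$ in a theorem phrased for fixed $\alpha,\beta$; once that is granted, the verification is a one-line computation.
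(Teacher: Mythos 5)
Your proposal is correct and follows essentially the same route as the paper: specialize $\alpha:=\frac\pi3$, $\beta:=\frac{2\pi}3$, observe that both ratios $\frac{\sin^2\alpha}{\sin^2(\alpha-\beta)}$ and $\frac{\sin^2\beta}{\sin^2(\alpha-\beta)}$ equal $1\in\zz\subseteq M_\rr$, and invoke Theorem~\ref{thm:ring}~(d). Your extra remarks --- that $M_\rr$ depends only on $U$ (licensing the specialization of $\alpha,\beta$) and the explicit adjunction $U':=U\cup\{\frac\pi3,\frac{2\pi}3\}$ for the second assertion --- merely make explicit what the paper leaves implicit.
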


\begin{proof}
 Set $\alpha:=\frac\pi3$ and $\beta:=\frac{2\pi}3$. Then
 \[\frac{\sin^2\alpha}{\sin^2\bigl(\alpha-\beta\bigr)}=1=\frac{\sin^2\beta}{\sin^2\bigl(\alpha-\beta\bigr)}.\]
 Hence, $M\bigl(U\cup\{\alpha,\beta\}\bigr)$ is an origami ring by Theorem~\ref{thm:ring}~(d).
\end{proof}

\begin{remark}
 There exist sets $U$ such that, regardless of the choice of $\gamma\in[0,\pi[$, the origami set $M\bigl(U\cup\{\gamma\}\bigr)$ is not an origami ring. Thus, in the general case we cannot expect a one-angle version of the above corollary.\medskip
 
 \noindent  An example of such a set~$U$ can be found by constructing angles $\alpha,\beta\in\mathopen]0,\pi[$ that fulfill the conditions
 \[\text{(I)}\;\;\frac{\sin^2\alpha}{\sin^2\bigl(\alpha-\beta\bigr)}=\sqrt2\qquad\text{and}\qquad \text{(II)}\;\; \frac{\sin^2\beta}{\sin^2\bigl(\alpha-\beta\bigr)}\text{ is transcendental over $\qq$}.\]
 To this end, let $\alpha\in\mathopen]0,\pi[$ be arbitrary. Set $\beta:=\alpha-\arcsin\Bigl(\frac1{\sqrt[4]2}\cdot\sin\alpha\Bigr)$ and consider $\beta$ a function with respect to~$\alpha$. Since the derivative of~$\beta$ is always positive, it follows that $\beta\in\mathopen]0,\pi[$. Moreover, this choice of~$\beta$ fulfills equation~(I) and always gives $\alpha\ne\beta$. Now, consider the continuous function
 \[f:\; ]0,\pi[\to\rr,\qquad \alpha\mapsto \frac{\sin^2\beta}{\sin^2\bigl(\alpha-\beta\bigr)}.\]
 As $f$ is non-constant, there exists $\alpha\in\mathopen]0,\pi[$ such that $f(\alpha)$ is transcendental over $\qq$. Thus, we have found angles $\alpha,\beta\in\mathopen]0,\pi[$ with $\alpha\ne\beta$ that fulfill (I) and (II).
  
 Set $U:=\{0,\alpha,\beta\}$. Example~\ref{ex:mr} shows that $M(U)_\rr=\zz$. So $M(U)$ is not an origami ring by Theorem~\ref{thm:ring}~(d). Let $\gamma\in[0,\pi[$ be arbitrary. We may assume that $\gamma\notin U$. Then
 \[M\bigl(U\cup\{\gamma\}\bigr)_\rr\stackrel{\text{Ex.~\ref{ex:mr}}}=\zz[p(\gamma),\frac1{p(\gamma)},\frac1{p(\gamma)-1}\bigr]\subseteq\qq\bigl(p(\gamma)\bigr),\]
 where $\qq\bigl(p(\gamma)\bigr)$ denotes the subfield of~$\rr$ generated by $p(\gamma)$.
 
 We now show that $\gamma$ cannot be chosen in a way such that $M\bigl(U\cup\{\gamma\}\bigr)$ becomes an origami ring. To this end assume that Theorem~\ref{thm:ring}~(d) is fulfilled. Then, 
 by (II), $\qq\bigl(p(\gamma)\bigr)$ contains a transcendental element. Hence, $p(\gamma)$ must be transcendental. But then $\qq\bigl(p(\gamma)\bigr)$ is a purely transcendental extension of $\qq$ that does not contain~$\sqrt2=\frac{\sin^2\alpha}{\sin^2(\alpha-\beta)}$.
\end{remark}

\section{Some Examples}

\noindent In this section we further illustrate our notation and results by giving examples of origami sets and rings.

\paragraph{Discrete origami sets} It is well known that an origami set $M(U)$ is dense in~$\cc$ if and only if $U$ contains more than three elements, cf.~\cite[Cor.~10]{ned15} or~\cite[Thm.~3.7]{bah16}. This fact also follows from the proof of our next result which classifies the origami sets that are discrete subsets of~$\cc$.

\begin{proposition}
 For an origami set $M:=M(U)$ the following statements are equivalent:
 \begin{enumerate}
  \item $M$ is a discrete subset of~$\cc$.
  \item $|U|=3$.
  \item $M_\rr=\zz$.
  \item There exists an element $z\in M$ such that $M=\zz+z\cdot\zz$.
 \end{enumerate}
\end{proposition}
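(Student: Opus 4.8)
The plan is to establish the cycle of implications (a) $\Rightarrow$ (b) $\Rightarrow$ (c) $\Rightarrow$ (d) $\Rightarrow$ (a), so that the four statements become mutually equivalent. Two of the arrows are immediate from the structure theory already developed. For (b) $\Rightarrow$ (c) I simply invoke Example~\ref{ex:mr}, which computes $M_\rr=\zz$ whenever $|U|=3$. For (c) $\Rightarrow$ (d), Theorem~\ref{thm:mexplicit} gives $M=M_\rr+M_\rr\cdot\llbracket0,1\rrbracket=\zz+\zz\cdot\llbracket0,1\rrbracket$, so one may take $z:=\llbracket0,1\rrbracket\in M$ and read off $M=\zz+z\cdot\zz$.

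For (d) $\Rightarrow$ (a), the key observation is that $\llbracket0,1\rrbracket\in M$ is not real: by Lemma~\ref{lem:anglecoord}(b) its $(\alpha,\beta)$-coordinates $(0,1)$ are unequal, so $M\not\subseteq\rr$. Hence, if $M=\zz+z\cdot\zz$, the generator $z$ cannot be real, which means $1$ and $z$ are $\rr$-linearly independent. Therefore $\zz+z\cdot\zz$ is a full-rank lattice in $\cc\cong\rr^2$, and such a lattice is a discrete subset of $\cc$, giving~(a).

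The main work is in (a) $\Rightarrow$ (b), which I prove by contraposition: assuming $|U|\geq4$ (recall $|U|\geq3$ always), I produce a limit point of $M$. I pick $\gamma\in U\smallsetminus\{0,\alpha,\beta\}$. Since $p$ is injective with $p(\alpha)=0$ and $p(\beta)=1$, the value $p(\gamma)\in M_\rr$ avoids $\{0,1\}$, and by Theorem~\ref{thm:mrexplicit} the reciprocals $\tfrac1{p(\gamma)}$ and $\tfrac1{p(\gamma)-1}$ lie in $M_\rr$ as well. A short case analysis then forces $M_\rr\neq\zz$: were $M_\rr=\zz$, then $p(\gamma),\tfrac1{p(\gamma)}\in\zz$ would give $p(\gamma)\in\{-1,1\}$, whereas $p(\gamma)-1,\tfrac1{p(\gamma)-1}\in\zz$ would give $p(\gamma)\in\{0,2\}$, an impossibility.

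Finally I combine $M_\rr\neq\zz$ with the general fact that a subring $R$ of $\rr$ containing $\zz$ is either equal to $\zz$ or dense in $\rr$. Indeed, a discrete additive subgroup of $\rr$ containing $1$ has the form $\tfrac1n\zz$, and closure under multiplication applied to $(\tfrac1n)^2$ forces $n=1$; since $M_\rr$ is a ring by Theorem~\ref{thm:mrexplicit}, its being discrete would force $M_\rr=\zz$. Thus $M_\rr\neq\zz$ makes $M_\rr$ dense in $\rr$, and because $M_\rr\subseteq M$ the set $M$ has a limit point and is not discrete, completing the contrapositive. I expect this last density step to be the only real obstacle: it needs the dichotomy for additive subgroups of $\rr$ rather than any origami-specific input, while every other arrow follows directly from the explicit descriptions of $M$ and $M_\rr$.
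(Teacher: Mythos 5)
Your proof is correct, and its skeleton --- the cycle (a)$\Rightarrow$(b)$\Rightarrow$(c)$\Rightarrow$(d)$\Rightarrow$(a), with (b)$\Rightarrow$(c) via Example~\ref{ex:mr} and (c)$\Rightarrow$(d) via Theorem~\ref{thm:mexplicit} with $z=\llbracket0,1\rrbracket$ --- coincides with the paper's; the only genuine divergence lies inside the contrapositive of (a)$\Rightarrow$(b). There the paper picks $\gamma\in U\smallsetminus\{0,\alpha,\beta\}$ and argues that $p(\gamma)$ or $p(\gamma)^{-1}$ furnishes an element of $M_\rr\cap\mathopen]0,1[$, whose powers (using the ring structure) accumulate at $0$, so that $M_\rr$ is a non-discrete subgroup of $\rr$ and hence dense; it then transports this to density of $M$ in all of $\cc$ via the continuous $\rr$-linear surjection $\llbracket\cdot,\cdot\rrbracket:\rr^2\to\cc$. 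You instead rule out $M_\rr=\zz$ by the integrality contradiction with $p(\gamma)$, $1/p(\gamma)$, $1/(p(\gamma)-1)$, and then invoke the discrete-or-dense dichotomy for subgroups of $\rr$, concluding only that $M$ is not discrete. Both routes are valid, and each buys something: your case analysis is airtight where the paper's one-liner is slightly loose --- as stated, ``$p(\gamma)$ or $p(\gamma)^{-1}$'' does not cover $p(\gamma)<0$ (the paper's own figure shows $p(\gamma)$ may be negative), and even after passing to $-p(\gamma)$ the boundary case $p(\gamma)=-1$ needs an element such as $1/(p(\gamma)-1)=-\tfrac12$, exactly the element your argument exploits --- whereas the paper's route proves strictly more: it recovers, as advertised just before the proposition, the known fact that $M(U)$ is dense in $\cc$ whenever $|U|\geq4$, which your non-discreteness conclusion does not yield. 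Finally, your (d)$\Rightarrow$(a) fills in a detail the paper leaves implicit in ``$M$ is a lattice,'' namely that the generator $z$ cannot be real because $\llbracket0,1\rrbracket\in M\smallsetminus\rr$, so $\zz+z\cdot\zz$ really has rank two.
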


\begin{proof}
 Assume that~(b) does not hold. Then $|U|\geq4$, and we can write $U=\{0,\alpha,\beta,\gamma,\ldots\}$ with pairwise different elements $\alpha,\beta,\gamma\in\mathopen]0,\pi[$. By considering $p(\gamma)$ or $p(\gamma)^{-1}$ we see that $M_\rr\cap\mathopen]0,1[\ne\varnothing$. The ring structure of $M_\rr$ shows that $0\in M_\rr$ is a limit point of $M_\rr$. Since $M_\rr$ is a subgroup of~$\rr$, it follows that $M_\rr$ ist dense in~$\rr$. The map $\llbracket\cdot,\cdot\rrbracket:\;\rr^2\to\cc$ is surjective and, as it is linear, continuous. Hence, $M=\llbracket M_\rr,M_\rr\rrbracket$ is a dense subset of~$\cc$. Therefore~(a) does not hold.
 
 Assume~(b). Then~(c) follows from Example~\ref{ex:mr}.
 
 Assume~(c) and set $z:=\llbracket 0,1\rrbracket$. Then Theorem~\ref{thm:mexplicit} gives $M=M_\rr+z\cdot M_\rr=\zz+z\cdot\zz$.
 
 Assume~(d). Then $M$ is a lattice and, thus, a discrete subset of~$\cc$. So,~(a) holds.
\end{proof}

\begin{remark}
 Discrete origami sets were studied in detail by Nedrenco~\cite{ned15}. In this special case his Theorem~$2$ corresponds to our Theorem~\ref{thm:mexplicit} and his Remark~$4$ matches criterion~(c) of our~Theorem~\ref{thm:ring}.
\end{remark}

\paragraph{An origami ring with $|U|=4$}

Bahr and Roth~\cite{bah16} consider the origami set $M:=M(U)$ with $U=\bigl\{0,\frac\pi3,\frac\pi4,\frac\pi5\}$. They ask whether $M$ is an origami ring and strongly suspect that it is not. In fact, it is:\medskip

\noindent Set $\alpha:=\frac\pi3$, $\beta:=\frac\pi4$, and $\gamma:=\frac\pi5$. One obtains
\[\frac{\sin^2\alpha}{\sin^2(\alpha-\beta)}=6+3\sqrt3 \qquad\text{and}\qquad \frac{\sin^2\beta}{\sin^2(\alpha-\beta)}=4+2\sqrt3.\]
It readily follows that $M$ is an origami ring if and only if $\sqrt3\in M_\rr=\zz\Bigl[p,\frac1p,\frac1{p-1}\Bigr]$ where
\[p:=p(\gamma)=\frac{\sin(\alpha-\gamma)\cdot\sin\beta}{\sin(\alpha-\beta)\cdot\sin\gamma}=\bigl(1+\sqrt3\bigr)\cdot\sqrt{2+\frac2{\sqrt5}}\cdot\sin\Bigl(\frac{2\pi}{15}\Bigr).\]
Note that $p$ is algebraic over~$\qq$; its minimal polynomial is given by
\[\mu=X^8+4X^7-8X^6-20X^5+\frac{104}5X^4+16X^3-8X^2-\frac{16}5X+\frac{16}{25}\in\qq[X].\]
Since $M_\rr\subseteq\qq(p)$, a necessary condition for $M$ to be a ring is $\sqrt3\in\qq(p)$. This can be checked with a computational algebra system such as MAGMA~\cite{magma}. One obtains that the polynomial $X^2-3\in\qq(p)[X]$ splits, showing that the necessary condition is fulfilled.\medskip

\noindent Now we show that even $\sqrt3\in M_\rr$ holds. Since
\[M_\rr=\zz\bigl[p,\frac1p,\frac1{p-1}\Bigr]=\Bigl\{\frac{f(p)}{p^a\cdot(p-1)^b} : f\in\zz[X] \text{ and } a,b\in\nn_0\Bigr\},\]
$\sqrt3$ is an element of $M_\rr$ if and only if there are $f\in\zz[X]$ and $a,b\in\nn_0$ such that the equation $\sqrt3\cdot p^a (p-1)^b=f(p)$ is fulfilled.

We briefly sketch how $f$, $a$, and $b$ can be found: Choose random parameters $a,b$ and check if the polynomial $X^2-3p^{2a}(p-1)^{2b}$ splits over $\qq(p)$. If it does, its roots can be represented as polynomials in~$p$ with rational coefficients. We are only interested in the case where the denominators of these coefficients are~$1$, or~$5$, or~$25$. This case occurs, for instance, for $(a,b)=(5,4)$ and gives
\[\sqrt3\cdot p^5(p-1)^4=80p^7-82p^6-\frac{1573}5 p^5+\frac{1278}5 p^4+\frac{1224}5 p^3-\frac{524}5 p^2-\frac{1208}{25} p+\frac{232}{25}.\]
Now, $\mu$ can be used to get rid of these denominators. We explain the procedure with the aid of the coefficient~$\frac{232}{25}$. Note that $\frac{232}{25}+23\cdot\frac{16}{25}=24\in\zz$. Thus, we can “lift” the coefficient~$\frac{232}5$ into the set of integers by computing
\begin{eqnarray*}
 \sqrt3\cdot p^5(p-1)^4&=&\sqrt3\cdot p^5(p-1)^4+23\cdot 0=\sqrt3\cdot p^5(p-1)^4+23\cdot\mu(p)\\
 &=&23 p^8+172p^7-266p^6-\frac{3873}5 p^5+734p^4\\
 &&\quad+\frac{3064}5 p^3-\frac{1444}5 p^2-\frac{3048}{25}p+24.
\end{eqnarray*}
Using this technique several times, one finally gets the equation
\begin{eqnarray*}
\sqrt3&=&\frac1{p^5\cdot(p-1)^4}\cdot\Bigl(-20p^{13}-80p^{12}+140p^{11}+305p^{10}-338p^9+110 p^8+292p^7\\
&&\quad-194 p^6-825 p^5+46p^4+424p^3-28p^2-56p+8\Bigr)\in M_\rr.
\end{eqnarray*}
This equality can now easily be verified with the help of a technical computing system such as Mathematica. By Theorem~\ref{thm:ring}~(d), $M$ is an origami ring.

\newpage 
\bibliographystyle{plainnat}
\bibliography{origamisets} 

\end{document}